\newtheorem{theorem}{Theorem}[section]
\newtheorem{corollary}[theorem]{Corollary}
\newtheorem{definition}[theorem]{Definition}
\newtheorem{lemma}[theorem]{Lemma}
\newtheorem{proposition}[theorem]{Proposition}
\newenvironment{proof}[1][Proof]{\noindent\textbf{#1.} }{\ \rule{0.5em}{0.5em}}
\begin{document}
\title{Isoparameteric hypersurfaces in a Randers sphere of constant flag curvature}
\author{Ming Xu
\\
College of Mathematics\\
Tianjin Normal University\\
Tianjin 300387, P.R. China\\
Email: mgmgmgxu@163.com\\
 \\}
\date{}
\maketitle{}

\abstract{In this paper, I study the isoparametric hypersurfaces in a Randers sphere $(S^n,F)$ of constant flag curvature, with the navigation datum $(h,W)$. I prove that an isoparametric hypersurface $M$ for the standard round sphere $(S^n,h)$ which is tangent to $W$ remains isoparametric for $(S^n,F)$ after the navigation process. This observation provides a special class of isoparametric hypersurfaces in $(S^n,F)$, which can be equivalently  described as the regular level sets of isoparametric functions $f$ satisfying $-f$ is transnormal. I provide a classification for these special isoparametric hypersurfaces $M$, together with their ambient metric $F$ on $S^n$, except the case that $M$ is of the OT-FKM type with the multiplicities $(m_1,m_2)=(8,7)$. I also give a complete classificatoin for all homogeneous hypersurfaces in $(S^n,F)$. They all belong to these special isoparametric hypersurfaces. Because of the extra $W$, the number of distinct principal curvature can only be 1,2 or 4, i.e.
there are less homogeneous hypersurfaces for $(S^n,F)$ than those for $(S^n,h)$.

{\bf Key words}: Randers sphere of constant flag curvature; isoparametric function; isoparametric hypersurface; homogeneous hypersurface; navigation.

{\bf Mathematics Subject Classification (2010)}: 53C60, 53C42, 22E46.
}

\section{Introduction}

In Riemannian geometry, the study on {\it isoparametric
hypersurfaces} has a long history. It is defined
as a regular level set for an
{\it isoparametric function} $f$ on a Riemannian manifold $(N,ds^2)$, i.e.
\begin{equation}\label{isoparametric-function-defining-condition}
|\nabla f|^2=a(f),\mbox{ and }\Delta f=b(f),
\end{equation}
in which $a(\cdot)$ is smooth and
$b(\cdot)$ is continuous.

The classification of isoparametric hypersurfaces in space forms is a classical geometric problem with a history of almost
one hundred years. Those in Euclidean and hyperbolic spaces
were classified in 1930's \cite{Ca1} \cite{Se} \cite{So}.
But for the most difficult case, those in a unit sphere, the
classification work occupied a long list of works \cite{Ca2} \cite{Ca3} \cite{CCJ} \cite{Ch2011} \cite{Ch2013}
\cite{DN1985} \cite{FKM1981} \cite{Mi2013} \cite{Mu1980} \cite{Mu1981} \cite{N1} \cite{OT1975} \cite{OT1976} \cite{TT1972}, and was recently completely solved \cite{Ch2016}. Isoparametric functions and isoparametric
hypersurfaces on other Riemannian manifolds, especially
the exotic spheres, were studied by Z. Tang and his students
\cite{QT2014}.

On the other hand, isoparametric function and isoparametric hypersurface in Finsler geometry have not been studied until recently Q. He, S. Yin
and Y. Shen purposed their definition for a Finsler
space \cite{HYS2015}, satisfying
similar conditions as in (\ref{isoparametric-function-defining-condition}). But now the gradient and Laplacian are only smoothly defined on the open set where $df\neq 0$. Generally speaking,
they are nonlinear and hard for
calculation,
because of the changing base vectors. Most of their good
properties in Riemannian geometry can not be easily generalized.

Studying and classifying isoparametric hypersurfaces in Finsler space forms, i.e. complete simply connected Finsler spaces with constant flag curvature, are interesting problems naturally generalized from Riemannian geometry. But in Finsler geometry,
the metrics for space forms can be very complicated, like the examples R. Bryant constructed on spheres \cite{Br1996}. We know very few about them except some special cases.

In \cite{HYS2015} and \cite{HYS2016},
the authors considered two special cases of Finsler space forms for the ambient space, i.e. Minkowski space (with zero flag curvature), and Funk spaces (with negative constant flag curvature). They classified the isoparametric hypersurfaces in them. However, for ambient space with positive constant flag curvature, their progress is
relatively slow.

In this paper, I will consider the (non-Riemannian) {\it Randers sphere of constant flag curvature} for the ambient space, and study a special class of isoparametric hypersurfaces in them, including all the homogeneous ones. Until now, they are the only known
examples in Finsler space forms of positive flag curvature. We
guess they are the only ones, at least for Randers spheres of constant flag curvature. But this general classification problem seems very hard.

To introduce the special isoparametric hypersurfaces studied
in this paper. We need to use the {\it navigation process} and the
celebrated works of D. Bao, C. Robles and Z. Shen on Randers spheres of constant flag curvature \cite{BRS2004}. Briefly speaking, a Randers sphere $(S^n,F)$ of constant flag curvature $\kappa$ is produced by the
navigation process with the datum $(h,W)$, such that $(S^n,h)$ is a standard Riemannian sphere with the same constant curvature $\kappa$ (thus we must have $\kappa>0$),
and $W$ is a Killing vector field for $(S^n,h)$.

The isoparametric hypersurface I have studied in this paper
are those tangent to the Killing vector field $W$ in the navigation datum $(h,W)$ for the ambient Randers sphere $(S^n,F)$ of constant flag curvature. We see a correspondence between these special isoparametric hypersurfaces of $(S^n,F)$, and those
of $(S^n,h)$ which are tangent to $W$. We summarize it as the
following theorem.

\begin{theorem}\label{main-thm}
Let $(S^n,F)$ be a Randers sphere of constant flag curvature 1,
corresponding to the navigation datum $(h,W)$. Then
for any isoparametric hypersurface $M$ of the unit sphere $S^n(1)=(S^n,h)$, we can find a Killing vector field $W$ tangent to $M$, such that $M$ is isoparametric for the Randers sphere $(S^n,F)$ defined by the navigation datum $(h,W)$.
Conversely, any isoparametric hypersurface $M$ of $(S^n,F)$ which is tangent to $W$ is isoparametric for the unit sphere $S^n(1)$.
\end{theorem}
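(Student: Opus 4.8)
The plan is to reduce both directions to explicit computations in navigation coordinates, exploiting the fact that the defining conditions (1.1) for an isoparametric function only involve the gradient and Laplacian on the open set where $df\neq 0$, and that these operators transform in a controlled way under the navigation process when the hypersurface is tangent to $W$.

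\medskip

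\textbf{Step 1 (Set-up and the existence of $W$).} Given an isoparametric hypersurface $M\subset S^n(1)$, recall (Cartan, M\"unzner) that $M$ is a regular level set of an isoparametric function $f$ whose regular levels form a family of parallel hypersurfaces with a known tube structure around two focal submanifolds. I would first observe that the connected isometry group of $(S^n,h)$ acting on the family of parallel hypersurfaces containing $M$ always contains a nontrivial Killing field $W$ that is everywhere tangent to every member of this family; concretely, one can take $W$ inside the isotropy representation preserving $M$ (e.g.\ a suitable element of the Lie algebra of the group generated by the M\"unzner construction, or simply a generic Killing field in the stabilizer of $M$ — for homogeneous $M$ this is automatic, and in general the parallel family is invariant under a positive-dimensional group). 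One must also arrange $h(W,W)<1$ pointwise so that $(h,W)$ is a legitimate navigation datum for a Randers metric; this can be ensured by scaling $W$ down, which does not affect tangency to $M$.

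\medskip

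\textbf{Step 2 (Navigation formulas for gradient and Laplacian).} Let $F$ be the Randers metric obtained from $(h,W)$ and write $\tilde f$ for the candidate isoparametric function of $(S^n,F)$; the natural guess is $\tilde f=f$ (possibly after composing with a fixed diffeomorphism along the $W$-flow, but since $W$ is tangent to the levels of $f$ we expect $\tilde f=f$ exactly). The key analytic input is the relation between the Finsler gradient $\nabla^F \tilde f$ and the Riemannian gradient $\nabla^h f$: on the set $df\neq 0$ one has the classical navigation identity expressing $\nabla^F\tilde f$ in terms of $\nabla^h f$ and $W$, and an analogous formula for the Finsler Laplacian $\Delta^F$ (which is defined with respect to the Busemann--Hausdorff or the natural volume form; since $W$ is Killing the volume form is the same as that of $h$ up to a constant). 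The crucial point here is that tangency of $W$ to the level set $\{f=c\}$ means $h(W,\nabla^h f)=0$ along every regular level, so the normal direction is unchanged and the extra terms involving $W$ contribute only through quantities that are themselves functions of $f$.

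\medskip

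\textbf{Step 3 (Verifying the two conditions).} Using the formulas from Step 2 together with $h(W,\nabla^h f)=0$, I would compute $F^*(d\tilde f)^2$ (the Finsler co-metric norm of $d\tilde f$) and show it equals $\tilde a(\tilde f)$ for some smooth $\tilde a$: this follows because the co-norm of $df$ in the Randers co-metric is an explicit algebraic expression in $|\nabla^h f|^2_h$ and $h(W,W)$ evaluated along the level set, and both are functions of $f$ (the former by the isoparametric property of $f$ for $h$, the latter because $\|W\|_h^2$ is constant along the flow lines that foliate each level — here one may need to choose $W$ in Step 1 so that $\|W\|_h$ is actually constant on each level, which is arranged by taking $W$ in the center of the relevant isotropy subgroup or invoking homogeneity of the focal structure). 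Similarly, $\Delta^F f=\tilde b(f)$ follows from the navigation formula for the Laplacian plus $\Delta^h f=b(f)$ and the Killing property of $W$, which kills the divergence-type correction terms. The converse direction runs the same computation backwards: if $f$ is isoparametric for $(S^n,F)$ and $W$ is tangent to the level sets, the same algebraic identities, now read as expressing $|\nabla^h f|^2_h$ and $\Delta^h f$ in terms of their Finsler counterparts and functions of $f$, show $f$ is isoparametric for $(S^n,h)$.

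\medskip

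\textbf{Main obstacle.} The technical heart is Step 2--3: writing down the navigation transformation of the Laplacian $\Delta^F$ correctly (the Finsler Laplacian is nonlinear and the choice of volume form matters) and checking that, under the tangency condition $h(W,\nabla^h f)=0$, all the $W$-dependent correction terms organize into functions of $f$ alone rather than genuinely new functions on $S^n$. I expect the Killing property of $W$ to be exactly what is needed to make the first-order (divergence) corrections vanish, and the tangency condition to be what keeps the zeroth-order corrections inside the ring of functions of $f$; making this precise, rather than any deep geometric difficulty, is where the real work lies. A secondary subtlety is ensuring in Step 1 that $W$ can be chosen tangent to $M$ with $\|W\|_h$ constant on each parallel level and $\|W\|_h<1$ everywhere — straightforward for homogeneous $M$, but requiring a short argument using the M\"unzner structure in general.
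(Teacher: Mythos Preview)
Your overall strategy is viable, but there is one unnecessary detour and one place where the paper's argument is sharper than what you sketch.

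The detour: your concern that $\|W\|_h^2$ must be constant on each level set is unfounded. Once $df(W)=0$, the dual-norm identity for a Randers metric obtained by navigation gives $F^*(df)=|df|_{h^*}+df(W)=|df|_{h^*}$ directly; the pointwise value of $\|W\|_h$ never enters, so the transnormal condition transfers with no side condition beyond tangency. You can drop that hypothesis entirely from Step~3.

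Where the paper is sharper: rather than writing a navigation formula for $\Delta^F$ and arguing that the Killing condition kills divergence corrections, the paper shows (via a pointwise lemma on Minkowski norms under navigation) that when $\langle W,\mathbf{n}\rangle_h=0$ the localization metric $g^F_{\nabla f}$ and $h$ have \emph{identical} coefficient matrices in suitably sheared local coordinates $\tilde x^2=x^2-x^1$, $\tilde x^i=x^i$ for $i\neq 2$, where $\partial_{x^1}=\mathbf{n}$ is the $h$-unit normal and $\partial_{x^2}=W$. Since $f$ depends only on $x^1=\tilde x^1$, its Laplacian and the principal curvatures of its level sets are literally the same numbers for $h$ and for $g^F_{\nabla f}$; no correction terms need to be tracked, and one gets for free that $-f$ is also isoparametric for $F$. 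Your direct-computation route could be pushed through, but it is messier and your sketch of it (``Killing kills the divergence-type corrections'') is where the real gap lies. Finally, for the existence of a tangent $W$ in Step~1 the paper does not argue abstractly from the M\"unzner tube structure; it invokes the classification (homogeneous or OT-FKM type) and in each case exhibits $\mathrm{Lie}(I_o(S^n,M,h))$ explicitly, then takes $W$ in its open $\|\cdot\|_h$-unit ball.
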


Theorem \ref{main-thm} is a summarization of Theorem \ref{main-thm-1}, Theorem \ref{main-thm-2} and Theorem \ref{main-thm-3}. In the later three theorems, I have proved
something more.

In Theorem \ref{main-thm-1}, I have given
equivalent descriptions for those special isoparametric hypersurfaces, i.e. they are the regular level sets of isoparametric functions $f$ of a Randers sphere of constant
flag curvature, such that $-f$ is also transnormal, or isoparametric.

In Theorem \ref{main-thm-2}, I point out that any connected homogeneous
hypersurface $M$ of a Randers sphere $(S^n,F)$ of constant flag curvature, with respect to the connected isometry subgroup $K=I_o(S^n,M,F)$
of $(S^n,F)$ preserving $M$, belongs to the special isoparametric hypersurfaces in Theorem \ref{main-thm} or Theorem \ref{main-thm-1}, i.e. $M$ is tangent to the Killing vector field $W$ in the navigation datum for $F$. I provide a complete classification for these homogeneous hypersurfaces. Notice that, because of the extra $W$ in the metric datum, there are less than the ones in the Riemannian context. Especially, those with the number of distinct principal curvatures $g=3$ or $6$ do not appear. Using the theory for CK-vector fields \cite{WPX2017} \cite{XW2016}, I determine all the choices of $W$ for $M$ to be $K$-homogeneous.

In Theorem \ref{main-thm-3}, I have calculated the group $G=I_o(S^{2l-1},M,h)$ and its Lie algebra for each isoparametric
hypersurface of OT-FKM type in the unit sphere $S^{2l-1}(1)=(S^{2l-1},h)$ with multiplicities $m_1=m\leq l-m-1=m_2$.
Theorem \ref{main-thm-2} and Theorem \ref{main-thm-3} together determine all the possible choices of $W$ in the navigation datum $(h,W)$ and classify the corresponding ambient metrics for the special isoparametric hypersurfaces in Theorem \ref{main-thm}, except the case when $(m_1,m_2)=(8,7)$ (see the remarks after Theorem \ref{main-thm-3}).

This paper is organized as following. In Section 2, I briefly review the fundamental knowledge of Finsler geometry which is
needed in later discussion. In Section 3, I introduce the isoparametric function and isoparametric hypersurface in Riemannian geometry and Finsler geometry. In Section 4, I study
 the special isoparametric hypersurfaces in Randers spheres of constant flag curvature, which are tangent to the vector field in the navigation datum. In Section 5 and 6, I continue to explore them from the view points of homogeneity and Clifford system.

{\bf Acknowledgement.} I would like to thank College of Mathematics and System Science, Xinjiang University, for its
hospitality during the preparation of this paper. I would also like to thank Zhongmin Shen, Qun He, Chao Qian, and Songting Yin for helpful discussions.

\section{Preliminaries}

In this section, I first briefly summarize some fundamental concepts in Finsler geometry from \cite{BCS2000} and \cite{Sh2001}

\subsection{Minkowski norm and Finsler metric}

A {\it Minkowski norm} on a real vector space $\mathbf{V}$,
$\dim \mathbf{V}=n$, is a continuous function $F:\mathbf{V}\rightarrow[0,+\infty)$ satisfying the following
conditions:
\begin{description}
\item{\rm (1)} $F$ is a positive smooth function on $\mathbf{V}\backslash 0$.
\item{\rm (2)} $F(\lambda y)=\lambda F(y)$ for any $y\in\mathbf{V}$ and $\lambda\geq 0$.
\item{\rm (3)} With respect to any linear coordinates $y=y^i e_i\in\mathbf{V}$, the Hessian matrix
    $$g_{ij}(y)=\left(\frac{\partial^2}{\partial y^i\partial y^j}F^2(y)\right)$$
    is positive definite for any $y\neq 0$.
\end{description}

We also call $g_{ij}(y)$ the {\it fundamental tensor}. The
inverse matrix for $(g_{ij}(y))$ is denoted as $(g^{ij}(y))$.
They are used in Finsler geometry to move indices up or down.
Each Hessian matrix $(g_{ij}(y))$ with $y\neq 0$ defines
an inner product $g^F_y=\langle\cdot,\cdot\rangle_y^F$ as following,
$$\langle u,v\rangle_y^F=g_{ij}(y)u^i v^j=\frac{1}{2}
\frac{\partial^2}{\partial s\partial t}F^2(y+su+tv)|_{s=t=0},$$
for any $u=u^i e_i$ and $v=v^i e_i$ in $\mathbf{V}$.
 It is obvious that the inner product $g^F_y$ is independent of the linear coordinates.

A {\it Finsler metric} on a smooth manifold $M$ is a continuous function $F:TM\rightarrow[0,+\infty)$ which is smooth on
the slit tangent bundle $TM\backslash 0$, and its restriction
to each tangent space is a Minkowski norm. We also
call $(M,F)$ a {\it Finsler manifold} or a {\it Finsler space}.

For example, Riemannian metrics are Finsler metrics, which Hessian matrices with respect
to any {\it standard local coordinates}
$x=(x^i)\in M$ and $y=y^i\partial_{x^i}\in T_xM$, $g_{ij}(x,y)$,
only depends on $x$. In this case, we usually refer to the smooth
section $$F^2=g_{ij}(x)dx^i dx^j\in\Gamma(\mathrm{Sym}^2(T^*M))$$
as the
Riemannian metric. Generally speaking, we will only consider
non-Riemannian metrics in Finsler geometry.

Randers metrics are the most simple and important Finsler metrics. They are of the form $F=\alpha+\beta$, where
$\alpha$ is a Riemannian metric and $\beta$ is a 1-form, such that $|\beta(x)|_{\alpha}<1$ at each point $x\in M$.
The $(\alpha,\beta)$-metrics are generalizations of Randers
metrics, which are of the form $F=\alpha\phi(\beta/\alpha)$
with a positive smooth function $\phi$, and similar $\alpha$
and $\beta$ as for Randers metrics.

There is a canonical way to define a Riemannian metric from
a smooth vector field $Y$ on a Finsler space $(M,F)$.
Let $\mathcal{U}\subset M$ the open subset where $Y$ is nonvanishing. Then $Y$ defines a Riemannian metric $g^F_Y$ on
$\mathcal{U}$
by all the Hessian matrices $(g_{ij}(x,Y(x)))$, i.e. for each standard local coordinates on $\mathcal{U}$, $g^F_Y=\sqrt{g_{ij}(x,Y(x))dx^i dx^j}$. We call this metric the {\it localization} of $F$
at $Y$.

\subsection{Geodesic and flag curvature}

A {\it geodesic} on a Finsler space $(M,F)$ is a nonconstant smooth curve $c(t):I\rightarrow M$ which satisfies the locally minimizing
principle for the arc length functional
$$L(c(t))=\int_I F(c(t),\dot{c}(t))dt.$$

Any geodesic can be re-parametrized such that $F(c(t),\dot{c}(t))\equiv\mathrm{const}>0$. Then it is locally defined by the equations
$$\ddot{c}^i(t)+2\mathbf{G}^i(c(t),\dot{c}(t))=0,\quad\forall i,$$
where $\mathbf{G}^i=\frac{1}{4}([F^2]_{x^ky^l}y^k-[F^2]_{x^l})$
is the coefficient of the {\it geodesic spray}.

A smooth vector field on $(M,F)$ is a {\it geodesic field}, if
it is nonvanishing everywhere, and its integration curves are
geodesics of $(M,F)$.

Now we define the {\it flag curvature}, which is ageneralization for the sectional curvature in Riemannian geometry. Let $y$ be any nonzero tangent vector in $T_xM$ and $\mathbf{P}=\mathrm{span}\{y,v\}$ a tangent
plane in $T_xM$ containing $y$. Then the flag curvature for
$(x,y,\mathbf{P})$ is defined by
$$K^F(x,y,\mathbf{P})=K^F(x,y,y\wedge v)
=\frac{\langle R^F_y v,v\rangle_y^F}{
\langle y,y\rangle_y^F\langle v,v\rangle_y^F-
[\langle y,v\rangle_y^F]^2},$$
where $R^F_y$ is the Riemann curvature, which can be locally presented as $R_y^F=R^k_i(y)\partial_{x^i}\otimes dx^k:T_xM\rightarrow T_xM$, where
$$R_k^i(y)=2\partial_{x^k}\mathbf{G}^i
-y^j\partial^2_{x^jy^k}\mathbf{G}^i+
2\mathbf{G}^j\partial^2_{y^jy^k}
\mathbf{G}^i-\partial_{y^j}
\mathbf{G}^i\partial_{y^k}\mathbf{G}^j.$$

When $F$ is Riemannian, the flag curvature $K^F(x,y,\mathbf{P})$ coincides with the sectional curvature, which only depends on the tangent plane $\mathbf{P}\in T_xM$.

\subsection{Navigation process and Randers spheres of constant flag curvature}

The navigation process is an important method for constructing new metrics from an old ones.

Let $F$ be a Finsler metric and $W$ a vector field on $M$ such that $F(W(x))<1$ for each $x\in M$. We denote $\tilde{y}=y+F(x,y)W(x)$ for any $y\in T_xM$. Then the equality
$\tilde{F}(\tilde{y})=F(y)$ defines another Finsler metric
which indicatrix $\mathcal{I}^{\tilde{F}}_{x}=\{\tilde{y}\in T_xM|\tilde{F}(x,y)=1\}$ is a parallel shifting of the indicatrix
$\mathcal{I}^F_{x}$ by the vector $W(x)$.

We call $(F,W)$ the
navigation datum defining $\tilde{F}$. When $F$
is Riemannian, $\tilde{F}$ is a Randers metric, which can be
presented as
$$\tilde{F}(\cdot)=\frac{1}{\lambda}(\sqrt{\lambda F(\cdot)^2+(\langle W,\cdot\rangle^F)^2}-\langle W,\cdot\rangle^F),$$
where $\lambda=1-F(W)^2$. All Randers
metrics can be produced in this way, and there is
a one-to-one correspondence between Randers metrics
and navigation data $(h,W)$ in which $h$ is a Riemannian metric.

The navigation process is crucial for studying Randers metrics of constant flag curvatures. The following theorem summarized from \cite{BRS2004} provides the foundation.

\begin{theorem} \label{classification-of-constant-curvature-Randers-space}
A Randers metric $F$ has the constant flag
curvature $\kappa$ iff its navigation datum $(h,W)$
satisfies the following conditions:
\begin{description}
\item{\rm (1)} The metric $h$ has constant curvature $k+\frac14\mu^2$ for some constant $\mu$.
\item{\rm (2)} The vector field $W$ is $\mu$-homothetic for
$h$, i.e. $\mathcal{L}_Wh=2\mu h$, where $\mathcal{L}$ is the
Lie derivative.
\end{description}
Furthermore, for a Randers sphere of constant flag curvature
with the navigation datum $(h,W)$, $W$ can only be a Killing vector field, i.e. $0$-homothetic for the metric $h$.
\end{theorem}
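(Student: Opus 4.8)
(This is essentially the Bao--Robles--Shen theorem; I outline the strategy I would follow.)

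The whole argument runs through the explicit navigation formulas. Write $F=\alpha+\beta$ for the Randers metric attached to $(h,W)$, using the expression recalled in Section 2.3, and let $\nabla$ and $R^h$ denote the Levi--Civita connection and curvature tensor of $h$. The first step is to compute the geodesic spray $\mathbf{G}^i$ of $F$ in terms of the spray of $h$ and the covariant derivative $\nabla W$; since $F$ is Randers this is a finite, if lengthy, calculation, organized most cleanly by splitting $\nabla W$ into its $h$-symmetric part (which records $\mathcal{L}_W h$) and its $h$-skew part. The second step is to feed $\mathbf{G}^i$ into the formula for $R^i_k(y)$ recalled in Section 2.2 to obtain a \emph{navigation curvature identity}: $R^F_y$ equals the curvature $R^h$ twisted by the wind flow, plus correction terms that are rational in $y$ with coefficients assembled from $W$, $\nabla W$ and $\nabla^2 W$. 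Dividing by the appropriate norms converts this into a formula for $K^F(x,y,y\wedge v)$.

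Granting conditions (1) and (2), I would verify constant flag curvature either by substituting into this identity --- when $\mathrm{Sym}(\nabla W)=\mu h$ and $R^h$ is the curvature of the constant-curvature model, the correction terms collapse and one reads off $K^F\equiv\kappa$ after the shift by $\tfrac14\mu^2$ --- or, more conceptually, via the geodesic correspondence: the flow $\psi_t$ of a $\mu$-homothetic $W$ satisfies $\psi_t^{*}h=e^{2\mu t}h$, and under navigation the unit-speed $F$-geodesics are obtained from unit-speed $h$-geodesics by dragging along $\psi_t$ (with a reparametrization absorbing the factor $e^{2\mu t}$). Transporting $h$-Jacobi fields along this correspondence computes the $F$-flag curvature from the $h$-sectional curvature and the homothety rate, producing exactly $\kappa=(\kappa+\tfrac14\mu^2)-\tfrac14\mu^2$.

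The converse is the direction I expect to be the real work. Assume $K^F\equiv\kappa$. For fixed $x$ the navigation curvature identity expresses the flag curvature as an explicit function of $y$ and $v$; setting it identically equal to $\kappa$ and clearing denominators yields a polynomial identity in $y$ (and $v$) whose coefficients must vanish termwise. The terms of highest order in $\nabla W$ are quadratic in $\mathrm{Sym}(\nabla W)$ together with a term linear in $\nabla\,\mathrm{Sym}(\nabla W)$; matching these across the free vector $y$ forces $\mathrm{Sym}(\nabla W)$ to be pointwise a multiple $\mu(x)\,h$ of the metric, and a trace/second-Bianchi computation then forces $\mu(x)$ to be constant, i.e.\ $W$ is $\mu$-homothetic --- this is (2). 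With (2) in hand the remaining terms of the identity reduce to the assertion that the $h$-sectional curvature is independent of the tangent plane and equal to $\kappa+\tfrac14\mu^2$, which is (1). The main obstacle is exactly this last bookkeeping: one must control the $y$-polynomial structure of the navigated curvature precisely enough that the two clean constraints ``$\mathrm{Sym}(\nabla W)\parallel h$'' and ``$R^h$ has constant sectional curvature'' genuinely drop out, rather than a weaker coupled system.

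For the final assertion, $h$ is a metric on the \emph{compact} manifold $S^n$, so a $\mu$-homothetic $W$ with $\mu\neq 0$ cannot occur: its flow $\psi_t$ is a self-diffeomorphism of $S^n$ with $\psi_t^{*}h=e^{2\mu t}h$, hence $\psi_t^{*}\,dV_h=e^{n\mu t}\,dV_h$, and integrating over $S^n$ gives $e^{n\mu t}\,\mathrm{Vol}(S^n)=\mathrm{Vol}(S^n)$, so $\mu=0$. (Equivalently, a complete Riemannian manifold carrying a non-isometric homothety is flat Euclidean space, which $S^n$ is not.) Thus $W$ is Killing, and by (1) the metric $h$ has the same constant curvature $\kappa$ as the flag curvature of $F$; since a metric of constant curvature on the compact simply connected $S^n$ ($n\geq 2$) must be positively curved, $\kappa>0$.
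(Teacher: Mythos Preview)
The paper does not supply its own proof of this theorem; it is quoted as background from Bao--Robles--Shen \cite{BRS2004}, so there is nothing to compare against. Your outline faithfully sketches the strategy of that reference (spray computation, navigation curvature identity, polynomial matching for the converse), and your compactness/volume argument for the final assertion that $\mu=0$ on $S^n$ is correct.
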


Randers spheres of constant flag curvature have many good geometric properties. For example, its S-curvature vanishes \cite{BR2003} \cite{Xi2005}.

\section{Isoparametric function and isoparametric hypersurface}

In this section, I will recall the definitions of isoparametric function and isoparametric hypersurface in Riemannian geometry
and very briefly discuss their classification when the ambient space is a unit sphere. Then I will introduce their generalization in Finsler geometry.

\subsection{Definitions in Riemannian geometry}

The key feature of an {\it isoparametric function} $f$ in Riemannian
geometry is that $|\nabla f|$ and $\Delta f$ only depend on values of $f$. Any regular level set of $f$ (i.e. pre-image of regular values of $f$) is called an {\it isoparametric hypersurface}.

In practice, for some technical reasons (see \cite{Wa1987} for example), we use the following definitions.
A nonconstant smooth function $f$ on a Riemannian manifold $N$ is called
{\it transnormal} if $|\nabla f|^2=a(f)$ for some smooth function $a(\cdot)$. Furthermore a transnormal function
$f$ is called {\it isoparametric} if $\Delta f=b(f)$ for some
continuous function $b(\cdot)$.

For example, if $N$ is simply connected and it admits the cohomogeneity one isometric action of a compact connected Lie group $G$, then we can find a suitable $G$-invariant isoparametric function $f$, such that each principal $G$-orbit is a {\it homogeneous isoparametric hypersurface}.

The classification for isoparametric hypersurfaces in unit spheres is one of the most celebrated geometric problems. It has been studied for eighty
years, and completely solved recently \cite{CCJ} \cite{Ch2011} \cite{Ch2013} \cite{Ch2016}
\cite{Mi2013}. Briefly speaking, it is either homogeneous or of the OT-FKM type. We will see more detailed
descriptions for these two cases in Section 5 and Sectoin 6
respectively.

Any connected imbedded orientable hypersurface in a unit sphere with constant mean curvature is an isoparametric hypersurface. Its principal curvatures, counting multiplicities, are constant functions too, which can be expressed as $\kappa_i=\cot(\theta+\frac{i-1}{g})$, with multiplicities $m_i$, and $m_i=m_{i+2}$ where the subindices are mod $g$.
The only possible $g$'s are 1, 2, 3, 4 and 6.

\subsection{Definitions in Finsler geometry}

Now we define a
smooth function $f$ on a Finsler space $(N,F)$ to be isoparametric, which needs proper interpretations for $\nabla f$ and $\Delta f$.

We assume the smooth function $f$ is not constant, i.e. the open subset
$\mathcal{U}=\{x\in N|df(x)\neq0\}$ of $M$ is nonempty. Each level set of $f$ in $\mathcal{U}$ is then a smooth hypersurface. Assume $x\in M=\mathcal{U}\cap f^{-1}(c)$, then
there exists a unique vector $\nabla f(x)$ pointing to the increasing direction of $f$ and satisfying
$$\langle\nabla f(x),v\rangle^F_{\nabla f(x)}=df(v)|_{x}.$$
Then $\nabla f$ defines a smooth vector field on $\mathcal{U}$
which can be continuously extended to $M\backslash \mathcal{U}$
where it equals 0. We call $\nabla f$
the {\it nonlinear gradient} of $f$. It can also be interpreted as the gradient vector field of $f|_{\mathcal{U}}$ with respect to the localization metric $g^F_{\nabla f}$. Notice that generally $-\nabla f\neq \nabla (-f)$.

Similarly, we also use the metric $g^F_{\nabla f}$ to define the {\it nonlinear Laplacian} $\Delta f$ of $f$, where $\Delta$ is the Laplacian with respect to $g^F_{\nabla f}$. Now we are ready to give the following definitions.

\begin{definition}\label{define-isoparametric-Finsler}
A nonconstant smooth function $f$ on $M$ is called transnormal if $F(\nabla f)$
only depends on values of $f$ when they are restricted to $\mathcal{U}=\{x\in M|df(x)\neq 0\}$. Further more a transnormal function $f$ is called isoparametric
if $\Delta f$ satisfies the same property on $\mathcal{U}$.
Each level set of $f$ in $\mathcal{U}$ is called an
isoparametric hypersurface.
\end{definition}

The isoparametric condition we propose here is simpler
than that in \cite{HYS2015}. Two
definitions apply the same idea that using the nonlinear gradient $\nabla f$ as the base vector. But their definition for $\Delta f$ involves
an arbitrary volume form on $M$, which results an extra
summation term $S(\nabla f)$, where $S(\cdot)$ is the S-curvature with respect to the chosen volume form \cite{Sh2001}. If we choose the BH-volume and discuss the case that
the ambient manifold is a Randers sphere of constant flag curvature, there is no difference between the two definitions because the S-curvature vanishes.

The transnormal condition can be equivalently presented as
\begin{equation}\label{Finsler-transnormal-condition}
F(\nabla f(x))=a(f(x)),\quad x\in \mathcal{U},
\end{equation}
where $a(\cdot)$ is a smooth function on $f(\mathcal{U})$.
By Sard theorem $f(\mathcal{U})$ is a dense open set in $f(N)$, and by the transnormal condition $\mathcal{U}=f^{-1}(f(\mathcal{U}))$.
But $f$ may have many critical values, which can not be erased
as in Riemannian geometry by adjusting $f$, because $-f$ may not be transnormal.

For example, in an Minknowski space $(\mathbb{R}^n,F)$ with $n>1$,  $S^{n-1}_F(x_0,r)=\{x\in\mathbb{R}^n|d_F(x_0,x)=r\}$ with $r>0$ is an
isoparametric hypersurface, corresponding to the isoparametric
function $f(x)=d_F(x_0,x)$ \cite{HYS2015}. If $-f$ is transnormal, by (1) of Lemma \ref{easy-lemma-for-double-transnormal} below, the dual
norm $F^*$ of $F$ must be reversible, i.e. $F^*(y)=F^*(-y)$, which implies $F$ is also
reversible.

This observation suggests it be an interesting problem to study and classify the special isoparametric hypersurface which isoparametric function $f$ satisfy $-f$ is also transnormal (or more strongly, $-f$ is also isoparametric).

\section{Special isoparametric hypersurfaces in Randers spheres of constant flag curvature}

In this section, we study the special isoparametric hypersurfaces proposed at the end of last section,
when the ambient manifold is a Randers sphere $(S^n,F)$ of constant flag
curvature. We will see they are equivalently characterized by
the condition that they are tangent to the Killing vector field
$W$ in the navigation datum $(h,W)$ for $F$.

Firstly, we consider a nonconstant smooth function $f$ on a Finsler space $(N,F)$
such that $f$ and $-f$ are both transnormal.

There exist
positive smooth function $a_1(\cdot)$ and $a_2(\cdot)$ such that $F(\nabla f)=a_1(f)$ and $F(\nabla (-f))=a_2(f)$ on
$\mathcal{U}=\{x\in N|df(x)\neq 0\}$. Denote
$$\mathbf{n}_1=\frac{\nabla f}{a_1(f)}\quad\mbox{and}\quad
\mathbf{n}_2=\frac{\nabla(-f)}{a_2(f)}$$
the two unit normal fields along all level sets of $f$ in
$\mathcal{U}$. Then we have the following easy lemma.
\begin{lemma}\label{easy-lemma-for-double-transnormal}
(1)
The function
$f$ defines a Finsler submersion from $(\mathcal{U},F)$ to the one-dimensional
manifold $f(\mathcal{U})$, such that the induced Finsler metric $F'$ on
$f(\mathcal{U})$ satisfy
$F'(f_*\mathbf{n}_1)=F'(f_*\mathbf{n}_2)=1$.

(2)
The vector fields $\mathbf{n}_1$ and $\mathbf{n}_2$ are
geodesic fields on $(\mathcal{U},F)$ (i.e. their integration curves are geodesics of $(\mathcal{U},F)$).
\end{lemma}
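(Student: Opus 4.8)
The plan is to make part (1) the substantive step and to extract part (2) from it by an elementary length comparison, mimicking the Riemannian fact that the unit gradient flow of a transnormal function consists of geodesics. For part (1), I would equip the $1$-manifold $f(\mathcal{U})$ with the quotient (``submersion'') norm: for $v\in T_x\mathcal{U}$ put $F'(f_*v)=\min\{F(u)\,:\,u\in T_x\mathcal{U},\ f_*u=f_*v\}$. On the affine hyperplane $\{u\,:\,df_x(u)=df_x(v)\}$ the function $F$ is strongly convex and coercive, so the minimizer exists and is unique; the key point is to identify it. By the defining relation $\langle\nabla f,u\rangle^F_{\nabla f}=df(u)$ together with $F(\nabla f)=a_1(f)$, the hyperplane $\{u\,:\,df_x(u)=a_1(f)^2\}$ is exactly the tangent hyperplane to the $F$-indicatrix of radius $a_1(f)$ at the point $\nabla f$; hence by convexity $F\ge a_1(f)$ on that hyperplane with equality only at $\nabla f$, and the analogous statement with $-f$ in place of $f$ handles the opposite side. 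Combining this with the $1$-homogeneity of $F$ gives $F'_t(c\,\partial_t)=c/a_1(t)$ for $c\ge 0$ and $F'_t(c\,\partial_t)=-c/a_2(t)$ for $c<0$, where $t$ is the coordinate on $f(\mathcal{U})$ and $\partial_t$ the increasing direction. In particular this minimum depends only on $f_*v$, so $f$ is a Finsler submersion and $F'$ is a bona fide (generally non-reversible) Finsler metric on the interval $f(\mathcal{U})$; and since $f_*\mathbf{n}_1=a_1(f)\,\partial_t$ and $f_*\mathbf{n}_2=-a_2(f)\,\partial_t$, we get $F'(f_*\mathbf{n}_1)=F'(f_*\mathbf{n}_2)=1$.

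For part (2), I would exploit the inequality $F'(f_*u)\le F(u)$, valid for all $u$ by construction of $F'$. Fix an integral curve $c(t)$ of $\mathbf{n}_1$; then $F(\dot c)\equiv 1$ and $\frac{d}{dt}f(c(t))=df(\mathbf{n}_1)=a_1(f(c(t)))>0$. Choose an antiderivative $G$ of $1/a_1$ on the relevant interval of $f(\mathcal{U})$. Then $\frac{d}{dt}G(f(c(t)))\equiv 1=F(\dot c)$, whereas for any piecewise-smooth curve $\gamma$ in $\mathcal{U}$ one has $\frac{d}{ds}G(f(\gamma(s)))=\frac{1}{a_1}df(\dot\gamma)$, which equals $F'(f_*\dot\gamma)\le F(\dot\gamma)$ whenever $df(\dot\gamma)\ge 0$ and is negative (so again $\le F(\dot\gamma)$) otherwise. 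Integrating, any $\gamma$ joining the endpoints of an arc of $c$ satisfies $L_F(\gamma)\ge G(f(\mathrm{end}))-G(f(\mathrm{start}))=L_F(\text{that arc})$, so $c$ is length-minimizing and hence a geodesic; thus $\mathbf{n}_1$ is a geodesic field. Replacing $f$ by $-f$ and $a_1$ by $a_2$ (so that $\mathbf{n}_2$ plays for $-f$ exactly the role of $\mathbf{n}_1$ for $f$) gives the same conclusion for $\mathbf{n}_2$.

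I expect the only real obstacle to be the well-definedness in part (1): one has to check that $\min\{F(u):f_*u=f_*v\}$ is independent of $x$, and this is precisely where the supporting-hyperplane identification of the minimizer with $\nabla f$ (resp.\ $\nabla(-f)$) and the transnormal hypotheses $F(\nabla f)=a_1(f)$, $F(\nabla(-f))=a_2(f)$ enter, after which $1$-homogeneity transports the equality from the single level $\{df_x=a_1(f)^2\}$ to the whole line $T_tf(\mathcal{U})$. The remaining points---that $F'$ satisfies the Minkowski-norm axioms on a $1$-dimensional space (positivity and smoothness off the zero section are clear, strong convexity is automatic in dimension one) and that $f_*\mathbf{n}_1$, $f_*\mathbf{n}_2$ take the stated values---are routine. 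A more conceptual but less elementary alternative for part (2) is to observe that the $\mathbf{n}_i$ are the two unit-speed horizontal lifts of the constant-speed geodesics of $(f(\mathcal{U}),F')$ and to invoke the fact that horizontal lifts of geodesics under a Finsler submersion are geodesics; I prefer the length comparison above since it uses nothing beyond $F'(f_*u)\le F(u)$.
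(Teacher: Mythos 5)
Your proposal is correct, and its part (1) is essentially the argument the paper regards as immediate: the quotient norm $F'(f_*v)=\min\{F(u): f_*u=f_*v\}$ is well defined on $f(\mathcal{U})$ exactly because the fundamental inequality identifies the minimizer over the affine hyperplane $\{df_x(u)=c\}$ with the appropriate positive multiple of $\nabla f$ (for $c>0$) or of $\nabla(-f)$ (for $c<0$), whose lengths $a_1(f)$, $a_2(f)$ depend only on the level; this yields $F'(c\,\partial_t)=c/a_1(t)$ for $c\geq 0$ and $|c|/a_2(t)$ for $c<0$, hence $F'(f_*\mathbf{n}_1)=F'(f_*\mathbf{n}_2)=1$, and the transnormality of both $f$ and $-f$ is used precisely here. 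Where you genuinely depart from the paper is part (2): the paper omits the proof and indicates that it quotes from the theory of Finsler submersions (\cite{PD2001}) the fact that horizontal lifts of geodesic fields are geodesic fields, so that $\mathbf{n}_1$ and $\mathbf{n}_2$ are geodesic fields as the unit horizontal lifts of the two unit geodesic fields on the one-dimensional base --- the route you mention only as an alternative at the end. Your calibration-type comparison, namely $\frac{d}{ds}G(f(\gamma(s)))\leq F(\dot\gamma)$ with $G'=1/a_1$ and equality along integral curves of $\mathbf{n}_1$ (and the analogous statement with $-f$, $a_2$ for $\mathbf{n}_2$), is a correct, self-contained substitute: it reproves the special case of the lifting theorem needed here, at the cost of a slightly longer write-up and of the bookkeeping that competitors stay in $\mathcal{U}$ (so that $G$ is defined on the relevant component of $f(\mathcal{U})$ and the minimization gives geodesics of $(\mathcal{U},F)$, which is exactly what the lemma asserts). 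What the paper's citation buys is brevity and the general statement for arbitrary base; what your argument buys is independence from the submersion machinery beyond the bare inequality $F'(f_*u)\leq F(u)$, which you get for free from your construction of $F'$.
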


See \cite{PD2001} for the theory of Finsler submersion. Here
we only use its definition, and the fact that the horizonal lift of a geodesic (field) is a geodesic (field). The proof
of the lemma is very easy, so we omit it.

Notice if we only have the transnormal property for $f$,
the function $f$ can still be treated as a "submersion for
just one side", so that only $\mathbf{n}_1$ is a horizonal lift of a geodesic field on $f(\mathcal{U})$. So $\mathbf{n}_1$ is
geodesic field for $(\mathcal{U},F)$, but $\mathbf{n}_2$ is not in general.

Secondly, we further assume the ambient manifold $(N,F)$ is
a Randers space, which corresponds to the navigation datum
$(h,W)$.

Let $c$ be the positive smooth function on $\mathcal{U}$
defined by
$f_*\mathbf{n}_1=-c f_*\mathbf{n}_2$. Then by Lemma
\ref{easy-lemma-for-double-transnormal}, $c$ only depends on values of $f$. Direct calculation shows $W=\frac12(\mathbf{n}_1+\mathbf{n}_2)$, and thus
$f_* W=\frac12(f_*\mathbf{n}_1+f_*\mathbf{n}_2)$ is a well defined vector field on $f(\mathcal{U})$.

Thirdly, we further require the ambient space is a Randers sphere $(S^n,F)$ of constant flag curvature, i.e. in its navigation datum $(h,W)$, the Riemannian
metric $h$ has a positive constant curvature and $W$ is a Killing vector field for $(S^n,h)$. Then we claim

\begin{lemma}\label{tangent-lemma}
If $f$ is a nonconstant smooth function on a Randers sphere of constant flag curvature, such that both $f$ and $-f$ are transnormal, then $f$ is preserved by the flow generated by $W$.
\end{lemma}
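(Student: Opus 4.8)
The plan is to show that the vector field $W$ is everywhere tangent to the level sets of $f$; since $W$ is Killing for $h$, its flow preserves those level sets and hence preserves $f$. Equivalently, I would show $df(W)=0$ on $\mathcal{U}$ (and then $df(W)\equiv 0$ on all of $S^n$ by continuity, since $W$ vanishes on no open set unless $f$ is already invariant). From the discussion preceding the lemma we have the clean algebraic fact $W=\tfrac12(\mathbf{n}_1+\mathbf{n}_2)$, where $\mathbf{n}_1=\nabla f/a_1(f)$ points in the increasing direction of $f$ and $\mathbf{n}_2=\nabla(-f)/a_2(f)$ points in the decreasing direction. So the whole question is whether $\mathbf{n}_1$ and $\mathbf{n}_2$ are negatives of each other, i.e. whether $c\equiv 1$ in the relation $f_*\mathbf{n}_1=-c\,f_*\mathbf{n}_2$; equivalently, whether $\mathbf{n}_1$ and $\mathbf{n}_2$ span the same line in each tangent space. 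If so, then $f_*W=\tfrac12(f_*\mathbf{n}_1+f_*\mathbf{n}_2)$, which lives on the one-dimensional manifold $f(\mathcal{U})$, is identically zero (the two terms cancel), and $df(W)=f_*W$ pulled back is zero.

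The key geometric input is part (2) of Lemma \ref{easy-lemma-for-double-transnormal}: both $\mathbf{n}_1$ and $\mathbf{n}_2$ are \emph{geodesic fields} for $(\mathcal{U},F)$. Now translate to the navigation picture. Under the navigation process with datum $(h,W)$, the $F$-geodesics of constant speed $1$ are exactly the curves $\gamma(t)$ such that $\gamma$ is obtained from an $h$-geodesic $\sigma$ of constant speed $1$ by the flow of $W$; concretely $\dot\gamma = \tilde v := v + W$ where $v$ is the $h$-unit tangent of the underlying $h$-geodesic. So the $F$-unit geodesic field $\mathbf{n}_i$ corresponds, via $\mathbf{n}_i = v_i + W$, to an $h$-unit vector field $v_i$ whose integral curves are $h$-geodesics — and an $h$-geodesic field whose integral curves all lie on the level hypersurfaces' normal trajectories. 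The upshot is that $v_i = \mathbf{n}_i - W$ is, up to sign, the $h$-unit normal of the hypersurface family $\{f=\mathrm{const}\}$: indeed $\mathbf{n}_1$ and $\mathbf{n}_2$ are both normal to the level sets for $F$ (they are multiples of $\nabla f$ and $\nabla(-f)$), the $F$-normal space at a point of a level set is one-dimensional, and subtracting the fixed tangential(-to-the-hypersurface) vector $W$ preserves normality in the $h$-sense once we know $W$ itself is tangent — but that is what we are trying to prove, so I must be careful about the order of the argument.

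The cleaner route, and the one I would actually carry out: work directly with $F$-normality. Since $\mathbf{n}_1 \parallel \nabla f$ and $\mathbf{n}_2 \parallel \nabla(-f)$, both $\mathbf{n}_1$ and $\mathbf{n}_2$ are $g^F$-orthogonal to the tangent space $T_xM$ of the level set (in the appropriate sense of the Legendre duality defining $\nabla$). In a Randers metric the indicatrix is a Euclidean sphere (the $h$-unit sphere) translated by $W$; the two points where a given tangent hyperplane direction of $M$ has the indicatrix as support/normal are $v+W$ and $-v+W$ for the two antipodal $h$-unit normals $\pm v$ of $M$ in the $h$-metric. These are precisely $\mathbf{n}_1$ and $\mathbf{n}_2$ (matching increasing/decreasing direction of $f$), so $\mathbf{n}_1 = v+W$, $\mathbf{n}_2 = -v+W$, giving $W = \tfrac12(\mathbf{n}_1+\mathbf{n}_2)$ \emph{and} $v = \tfrac12(\mathbf{n}_1-\mathbf{n}_2)$ with $v$ an $h$-unit $h$-normal to $M$. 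In particular $W$ differs from the $F$-normal $\mathbf{n}_1$ by $-v$, which is $h$-orthogonal to nothing relevant yet — rather, the decomposition directly exhibits $W$ as the ``midpoint shift'', hence $W \in T_xM \oplus \mathbb{R}v$; and since $v \perp_h T_xM$ we get $df(W) = $ (component of $W$ along $\nabla f$), which by $\langle W, v\rangle_h$ being... I would finish by computing $df(W) = \langle \nabla f, W\rangle^F_{\nabla f}$-type pairing and showing the $+v$ and $-v$ contributions from $\mathbf{n}_1,\mathbf{n}_2$ cancel.

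The main obstacle is not any single computation but getting the logic non-circular: I must establish the indicatrix/support-hyperplane description of $\mathbf{n}_1,\mathbf{n}_2$ in terms of $h$-normals \emph{without} first assuming $W$ is tangent to $M$. The safe way is to phrase everything in terms of the dual norm $F^*$ (whose indicatrix is the translate by $W$ of the $h^*$-sphere), note that $df(x)$ as a covector has $F^*$-norm $a_1(f)$ and $-df(x)$ has $F^*$-norm $a_2(f)$, and that $\nabla(\pm f)$ is the Legendre dual of $\pm df$; the Legendre transform of a translated sphere is explicit, and reading off $\nabla f$ and $\nabla(-f)$ from it immediately yields $W = \tfrac12(\mathbf{n}_1+\mathbf{n}_2)$ together with $\tfrac12(\mathbf{n}_1 - \mathbf{n}_2) \perp_h \ker(df) = T_xM$, from which $df(W) = 0$ drops out after a one-line pairing. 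Constant flag curvature and the sphere are not needed for this lemma beyond guaranteeing (via Theorem \ref{classification-of-constant-curvature-Randers-space}) that $W$ is Killing, which is what converts ``$W$ tangent to all level sets'' into ``the flow of $W$ preserves $f$''.
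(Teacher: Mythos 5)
Your Legendre-duality computation correctly reproduces the identities $\mathbf{n}_1=\mathbf{n}+W$, $\mathbf{n}_2=-\mathbf{n}+W$ (with $\mathbf{n}$ the $h$-unit direction of $\nabla^h f$), hence $W=\tfrac12(\mathbf{n}_1+\mathbf{n}_2)$ --- this is exactly the ``direct calculation'' the paper performs just before the lemma, and it does not need $W$ tangent to the level sets, so there is no circularity there. The gap is the final step: the ``one-line pairing'' that is supposed to give $df(W)=0$ does not exist. Pairing $df$ with $W=\tfrac12(\mathbf{n}_1+\mathbf{n}_2)$ gives $df(W)=\tfrac12\bigl(df(\mathbf{n}_1)+df(\mathbf{n}_2)\bigr)=\tfrac12\bigl(a_1(f)-a_2(f)\bigr)$, and nothing in the pointwise picture forces $a_1=a_2$; equivalently, substituting $\mathbf{n}_i=\pm\mathbf{n}+W$ and pairing merely returns $df(W)=df(W)$, a tautology. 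Indeed the conclusion is not a pointwise consequence of ``$W$ Killing for $h$ and $\pm f$ transnormal'': on a Randers--Minkowski space with navigation datum (Euclidean $h$, constant $W$), any nonzero linear function $f$ has both $f$ and $-f$ transnormal (the norms of the gradients are constants), yet $df(W)$ is a nonzero constant whenever $W\notin\ker df$. So your closing remark that constant flag curvature and the sphere are needed only to make $W$ Killing is wrong: compactness is essential.

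The paper supplies exactly this missing global ingredient. Since $df(\mathbf{n}_1)=a_1(f)$ and $df(\mathbf{n}_2)=-a_2(f)$ depend only on the value of $f$, the quantity $f_*W=\tfrac12\bigl(a_1(f)-a_2(f)\bigr)$ descends to the one-dimensional base $f(\mathcal{U})$, so the flow of $W$ permutes the level sets of $f$; if $f_*W\neq0$ at some regular value $c$, a flow map carries the sublevel set $N^c=\{f\le c\}$ onto $N^{c'}$ with $c'\neq c$. But the flow of $W$ consists of $h$-isometries of the compact sphere, hence preserves the finite $h$-volume, while $\mathrm{vol}(N^c)\neq\mathrm{vol}(N^{c'})$ for nearby distinct regular values --- a contradiction; continuity then extends the invariance across critical levels. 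If you want to keep your write-up, retain the duality computation (it is the cleanest route to $W=\tfrac12(\mathbf{n}_1+\mathbf{n}_2)$ and to the descent of $f_*W$), but replace the hoped-for algebraic cancellation by this volume argument or some comparable global one.
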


\begin{proof}
We first observe that $f_*W\equiv 0$ in $\mathcal{U}$, i.e. the flow generated by $W$ preserves each regular value of $f$. Assume
conversely $f_*W\neq 0$ at the pre-image for the regular value
$c$ of $f$, then a diffeomorphism generated by $W$ maps
$N^c=\{x\in N|f(x)\leq c\}$ to some other $N^{c'}$ with $c'\neq c$. It is a contradiction because $W$ is Killing vector field of $(N,h)$, i.e. it generates isometries which preserves the volume defined by $h$.

Because $f(\mathcal{U})$ is a dense open set in $f(S^n)$, by
continuity, critical values of $f$ are also preserved by the
flow generated by $W$, which ends the proof for the lemma.
\end{proof}

Fourthly, we consider the isoparametric condition and prove the following lemma.

\begin{lemma}
If $f$ is an isoparametric function on a Randers sphere $(S^n,F)$ of constant flag curvature, with the navigation datum $(h,W)$, such that both $f$ and $-f$ are transnormal, then
$f$ is also isoparametric for the Riemannian metric $h$.
\end{lemma}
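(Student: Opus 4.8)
The plan is to transfer the isoparametric condition for $F$ to the isoparametric condition for $h$ by carefully comparing the two nonlinear gradients, the two Laplacians, and using that the relevant data all descend through the one-dimensional Finsler submersion $f$. First I would record what the transnormality of $f$ and $-f$ for $F$ gives us: the two unit normal fields $\mathbf{n}_1=\nabla f/a_1(f)$ and $\mathbf{n}_2=\nabla(-f)/a_2(f)$, the scalar $c=c(f)$ with $f_*\mathbf{n}_1=-c\,f_*\mathbf{n}_2$, and — crucially — Lemma \ref{tangent-lemma}, which says $W$ is tangent to every level set of $f$, so $f_*W\equiv 0$ on $\mathcal U$ and $W=\tfrac12(\mathbf{n}_1+\mathbf{n}_2)$ lies in $TM$ pointwise. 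Because $W$ is tangent to the levels, the navigation relation $\tilde y=y+F(x,y)W(x)$ restricted to vectors normal to a level set behaves especially simply, and I expect to show that the $h$-unit normal $\mathbf{n}^h$ of a level set is a fixed affine/scalar combination of $\mathbf{n}_1$ and $\mathbf{n}_2$ whose coefficients depend only on $f$; equivalently, $\nabla^h f$ is a function of $f$ times a combination of $\nabla f$ and $\nabla(-f)$. This gives $|\nabla^h f|_h^2=a^h(f)$ for a smooth $a^h$, i.e. $f$ is transnormal for $h$.

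Next I would handle the Laplacian. The key structural fact I would exploit is that for any Finsler metric, along a transnormal $f$ the function $\Delta f$ (the nonlinear Laplacian w.r.t. $g^F_{\nabla f}$) can be expressed through the mean curvature of the level hypersurfaces computed in the localized Riemannian metric $g^F_{\nabla f}$, plus a term involving the derivative of $a_1(f)$; the same is true for $h$ with its own mean curvature. So the task reduces to comparing the mean curvature of a level set $M_c$ computed in $h$ with the mean curvatures computed in the two localizations $g^F_{\nabla f}=g^F_{\mathbf n_1}$ and $g^F_{\nabla(-f)}=g^F_{\mathbf n_2}$ — and these last two are exactly the data controlled by the hypothesis that $f$ and $-f$ are isoparametric for $F$. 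Because $W$ is Killing for $h$ and tangent to $M_c$, the flow of $W$ acts on $M_c$ by $h$-isometries; this forces the shape operator of $M_c$ in $h$ to be $W$-invariant, and more importantly lets me relate the $h$-second fundamental form to the $F$-side geometry through the navigation formulas, where the only place $W$ enters (being tangent) is benign. I would then combine: $\Delta^h f$ is built from the $h$-mean curvature of $M_c$ and $\tfrac{d}{dt}a^h$, the $h$-mean curvature of $M_c$ is a function of $f$ times (a combination of) the $F$-localized mean curvatures, and those are functions of $f$ by the isoparametric hypothesis for $\pm f$. Hence $\Delta^h f$ depends only on $f$, which is the claim.

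The main obstacle I anticipate is the bookkeeping in the middle step: writing down precisely how the localized metrics $g^F_{\mathbf n_1}$ and $g^F_{\mathbf n_2}$, the Randers structure $F=\alpha+\beta$ with $\beta$ determined by $W$, and the base metric $h$ are interrelated along the normal direction of $M_c$, and checking that every coefficient that appears is a function of $f$ alone rather than varying along $M_c$. I expect the computation to be cleanest if I choose, near a point of $\mathcal U$, the $h$-arclength parameter $t$ along the $h$-geodesic normal to the family $\{M_c\}$ (which, by Lemma \ref{easy-lemma-for-double-transnormal}(2) and the navigation correspondence, is closely tied to the $F$-geodesic field $\mathbf n_1$), write $f$ as a function of $t$, and reduce all three Laplacians to ODE expressions in $t$; then the hypotheses for $f$ and $-f$ give the two endpoint normalizations and the isoparametric conditions give that the relevant mean-curvature functions are functions of $t$ (equivalently of $f$) only. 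A secondary subtlety is the behavior at critical values of $f$, i.e. on $S^n\setminus\mathcal U$: here I would invoke, exactly as in the proof of Lemma \ref{tangent-lemma}, that $f(\mathcal U)$ is dense in $f(S^n)$ and argue by continuity that the relation $\Delta^h f=b^h(f)$, established on $\mathcal U$, is all that Definition \ref{define-isoparametric-Finsler} (applied to $h$) requires, since for a Riemannian metric the isoparametric condition is only imposed on $\mathcal U$ anyway.
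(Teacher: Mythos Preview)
Your overall strategy --- reduce to comparing mean curvatures of the level sets in $h$ versus in the localized metric $g^F_{\nabla f}$ --- matches the paper's, but you are missing the one concrete observation that makes the comparison go through, and the ``bookkeeping'' you flag as the main obstacle is exactly where that observation is needed. The paper does not try to express the $h$-shape operator as a combination of $F$-data with $f$-dependent coefficients. Instead it proves a pointwise navigation lemma: if the shift vector $v$ satisfies $\langle y,v\rangle_y^F=0$, then $\langle u,u\rangle_{\tilde y}^{\tilde F}=\langle u,u\rangle_y^F$ for every $u$ in the $g^F_y$-orthogonal complement of $y$. Applied with $y=\mathbf n$ (the $h$-unit normal) and $v=W$, this says the \emph{tangential} part of $g^F_{\mathbf n_1}$ on each level set equals the tangential part of $h$, not merely up to an $f$-dependent factor but literally. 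Then a shear of coordinates $\tilde x^2=x^2-x^1$ (with $\partial_{x^1}=\mathbf n$, $\partial_{x^2}=W$) turns $\partial_{\tilde x^1}$ into $\mathbf n_1$, and because $W$ is Killing for $h$ the $h_{ij}$ do not depend on $x^2$; the upshot is that $g^F_{\mathbf n_1}$ has the \emph{same} local expression in the $\tilde x$-coordinates as $h$ has in the $x$-coordinates. The two are therefore locally isometric constant-curvature metrics presenting the level sets in the same way, so constant principal curvatures for one give constant principal curvatures for the other, and $f$ is isoparametric for $h$ directly.

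Two smaller points. First, you invoke ``the hypothesis that $f$ and $-f$ are isoparametric for $F$'' to control the $g^F_{\mathbf n_2}$-mean curvature, but the lemma only assumes $-f$ is transnormal; the paper uses the transnormality of $-f$ solely through Lemma~\ref{tangent-lemma} (to force $W$ tangent) and needs the isoparametric condition only for $f$, via $g^F_{\mathbf n_1}$. Second, your plan to write $\mathbf n^h$ as an $f$-dependent combination of $\mathbf n_1,\mathbf n_2$ is unnecessary once $W$ is known to be tangent: since $\mathbf n_1=\mathbf n+W$ and $\mathbf n_2=-\mathbf n+W$ with $\mathbf n=\nabla^h f/|\nabla^h f|_h$, the $h$-transnormality is immediate from the coordinate picture, and no separate argument about coefficients is needed.
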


When restricted to $\mathcal{U}$, Lemma \ref{tangent-lemma} implies that $W$ is tangent to each level set of $f$. Denote
$\mathbf{n}=\nabla^h f/|\nabla^h f|$ the unit vector field in $\mathcal{U}$, where $\nabla^h$ is the gradient operator with respect to $h$. Then $\mathbf{n}_1=\mathbf{n}+W$, $\mathbf{n}_2=-\mathbf{n}+W$ and $[\mathbf{n},W]=0$.

At any point in $\mathcal{U}$, we can find local coordinates
$x=(x^i)$, $1\leq i\leq n$, such that
$\partial_{x^1}=\mathbf{n}$, $\partial_{x^2}=W$ and the level sets of $f$ are given by the equation $x^1\equiv\mathrm{const}$. Then the metric $h$ can be locally presented
as $$h=\sqrt{(dx^1)^2+\sum_{i,j>1}h_{ij}dx^i dx^j},$$
where the functions $h_{ij}$ do not depend on $x^2$, and $0\leq h_{22}<1$ (because $W$
is a Killing vector field of $h$ which length at each point is strictly less than 1).

Using above local coordinates, it is obvious to see $f$ is
transnormal with respect to $h$.

To describe the nonlinear Laplacian $\Delta f$, where $\Delta$
is the Laplacian operator with respect to the localization  $g^F_{\nabla f}=g^F_{\mathbf{n}_1}$, we define another local coordinate
system at $x$ as following,
$$\tilde{x}^2=x^2-x^1,\mbox{ and }\tilde{x}^i=x^i\mbox{ when }i\neq 2.$$
Then $\partial_{\tilde{x}^1}=\mathbf{n}_1$ and the level sets of $f$ are given by $\tilde{x}^1\equiv\mathrm{const}$. So we can similarly present $$g^F_{\mathbf{n}_1}=\sqrt{(d\tilde{x}^1)^2+
\sum_{i,j>1}g_{ij}d\tilde{x}^i d\tilde{x}^j},$$
where $g_{ij}=g^F_{ij}(x,\mathbf{n}_1)$. To see the relation between $g_{ij}$ and $h_{ij}$, we need the following lemma.

\begin{lemma} \label{lemma-on-navigation}
Let $\tilde{F}$ be the Minkowski norm on $\mathbf{V}$ defined by the navigation datum $(F,v)$, i.e.
$\tilde{F}(\tilde{y})=F(y)$ where $\tilde{y}=y+F(y)v$ for
each $y\in \mathbf{V}$. Then for any vector $y$ and $u$ in $\mathbf{V}$ satisfying $y\neq 0$ and
$\langle u,y\rangle_y^F=0$, we have
\begin{equation}\label{0000}
\langle u,u\rangle_y^F=
\frac{\langle u,u\rangle_{\tilde{y}}^{\tilde{F}}}{1-
\langle\tilde{y},v\rangle_{\tilde{y}}^{\tilde{F}}}.
\end{equation}
In particular, $\langle u,u\rangle_{\tilde{y}}^{\tilde{F}}
={\langle u,u\rangle_y^F}$ for all $u$ in the $g^F_y$-complement of $y$ if $\langle v,y\rangle_y^F=0$.
\end{lemma}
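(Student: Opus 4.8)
The plan is to compute both inner products $\langle\cdot,\cdot\rangle_y^F$ and $\langle\cdot,\cdot\rangle_{\tilde y}^{\tilde F}$ directly from the defining relation $\tilde F(\tilde y)=F(y)$, $\tilde y = y+F(y)v$, exploiting that differentiation along the indicatrix is what the fundamental tensor records. First I would recall the general principle that for a Minkowski norm $F$ and $y\neq 0$, $\langle u,u\rangle_y^F = \tfrac12 (F^2)''(0)$ along the line $s\mapsto y+su$, and more usefully that $\langle y,u\rangle_y^F = \tfrac12 (F^2)'(0) = F(y)\, dF_y(u)$, so that $\langle u,y\rangle_y^F=0$ exactly means $dF_y(u)=0$, i.e. $u$ is tangent to the indicatrix $\mathcal I^F$ at $y/F(y)$ scaled appropriately. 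The navigation map $y\mapsto \tilde y = y+F(y)v$ is precisely a parallel translation by $v$ on each level set $\{F=\text{const}\}$, hence it carries the indicatrix $\mathcal I^F$ to $\mathcal I^{\tilde F}$ and its differential carries indicatrix-tangent directions to indicatrix-tangent directions. So if $dF_y(u)=0$, then $d\tilde F_{\tilde y}(d\Phi_y(u))=0$ where $\Phi(y)=\tilde y$; and since $dF_y(u)=0$ we get $d\Phi_y(u) = u + dF_y(u)\,v = u$. That is the key simplification: a vector $g^F_y$-orthogonal to $y$ is left untouched by the navigation differential, and remains $g^{\tilde F}_{\tilde y}$-orthogonal to $\tilde y$.

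Next I would set up a one-parameter family realizing the second derivative. For fixed $u$ with $\langle u,y\rangle_y^F=0$, consider the curve $c(s) = y+su$ and its image $\tilde c(s) = c(s) + F(c(s))v$. Then $\tilde F(\tilde c(s)) = F(c(s))$ for all $s$, so $(\tilde F^2\circ\tilde c)(s) = (F^2\circ c)(s)$ identically; differentiating twice at $s=0$ gives an identity relating $\langle u,u\rangle_y^F$ (from the right side, using $dF_y(u)=0$ so the first-derivative term vanishes and $\tfrac12 (F^2\circ c)''(0)=\langle u,u\rangle_y^F$) to a combination of $\langle \tilde c'(0),\tilde c'(0)\rangle_{\tilde y}^{\tilde F}$ and $\langle \tilde y, \tilde c''(0)\rangle_{\tilde y}^{\tilde F}$ on the left. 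Here $\tilde c'(0) = u + dF_y(u) v = u$ (using orthogonality again) and $\tilde c''(0) = \big(\tfrac{d^2}{ds^2} F(c(s))\big)\big|_0\, v = \tfrac{1}{F(y)}\langle u,u\rangle_y^F\, v$ by the standard formula for the second derivative of $F$ along a line combined with $dF_y(u)=0$. Plugging in, the left side becomes $\langle u,u\rangle_{\tilde y}^{\tilde F} + \tfrac{1}{F(y)}\langle u,u\rangle_y^F\cdot 2\langle \tilde y, v\rangle_{\tilde y}^{\tilde F}$, and after noting $F(y)=\tilde F(\tilde y)$ and that $2\langle\tilde y,v\rangle_{\tilde y}^{\tilde F} = \tilde F(\tilde y)\, d(\tilde F^2)/\ldots$ — i.e. bookkeeping the normalization — one solves to obtain $\langle u,u\rangle_y^F (1 - \langle\tilde y,v\rangle_{\tilde y}^{\tilde F}/\text{(norm factor)}) = \langle u,u\rangle_{\tilde y}^{\tilde F}$, which rearranges to (\ref{0000}). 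I should be careful about whether the $v$ in $\langle\tilde y,v\rangle^{\tilde F}_{\tilde y}$ is normalized against $\tilde F(\tilde y)$ or against $1$; since the statement is scale-homogeneous in $y$ it suffices to check it for $F(y)=1$ and then invoke homogeneity, which keeps the constants clean.

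The main obstacle is purely bookkeeping: correctly tracking the homogeneity-degree-zero versus degree-one normalizations when passing between $\tfrac12(F^2)''$, the polarized fundamental tensor $g_{ij}(y)u^iu^j$, and the quantity $dF_y$; and making sure the term $\tfrac{d^2}{ds^2}F(c(s))|_0$ is computed as $\tfrac{1}{F(y)}(\langle u,u\rangle_y^F - dF_y(u)^2)$, which under the orthogonality hypothesis is just $\tfrac{1}{F(y)}\langle u,u\rangle_y^F$. There is no genuine geometric difficulty; the identity is essentially the statement that navigation is an isometry on each indicatrix, so the induced metrics on indicatrix tangent spaces agree up to the conformal factor coming from how the radial direction tilts, and that conformal factor is exactly $1-\langle\tilde y,v\rangle_{\tilde y}^{\tilde F}$. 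For the final sentence, when $\langle v,y\rangle_y^F = 0$ one shows $\langle\tilde y,v\rangle_{\tilde y}^{\tilde F}=0$ as well (again because the navigation differential fixes $v$-type and $y$-type orthogonality, or by a direct one-line computation), so the conformal factor becomes $1$ and $\langle u,u\rangle_{\tilde y}^{\tilde F} = \langle u,u\rangle_y^F$ on the whole $g^F_y$-complement of $y$.
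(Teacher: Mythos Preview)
Your approach is essentially identical to the paper's: both differentiate the identity $\tilde F^2\bigl(y+tu+F(y+tu)v\bigr)=F^2(y+tu)$ twice in $t$ at $t=0$, use $\langle u,y\rangle_y^F=0$ to kill the first-derivative terms, and solve for $\langle u,u\rangle_y^F$. Your observation about the normalization $F(y)=1$ is apt (the paper's displayed intermediate equation tacitly assumes it), and your justification of the last sentence via the parallel shift of indicatrices is exactly the paper's argument.
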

\begin{proof}
For any $t$, we have
$$\tilde{F}^2(y+tu+F(y+tu)v)=F^2(y+tu).$$
Differentiate it twice for the $t$-variable, and take $t=0$, then we get
$$\langle u,u\rangle_{\tilde{y}}^{\tilde{F}}+
\langle \tilde{y},\langle u,u\rangle_y^F \cdot v\rangle_{\tilde{y}}^{\tilde{F}}=\langle u,u\rangle_y^F,$$
and
$$\langle u,u\rangle_y^F=
\frac{\langle u,u\rangle_{\tilde{y}}^{\tilde{F}}}{1-
\langle\tilde{y},v\rangle_{\tilde{y}}^{\tilde{F}}}.$$

Because the indicatrix $\mathcal{I}^{\tilde{F}}$ is a parallel
shifting of the indicatrix $\mathcal{I}^F$, we have
$\langle \tilde{y},v\rangle_{\tilde{y}}^{\tilde{F}}=0$ when
 $\langle y,v\rangle_y^F=0$. So we have
 $\langle u,u\rangle_{\tilde{y}}^{\tilde{F}}
={\langle u,u\rangle_y^F}$ in this case.
\end{proof}

Notice that in Lemma \ref{lemma-on-navigation}, the statuses of $F$ and $\tilde{F}$ are symmetric, i.e. $({\tilde{F}},-v)$ is also the navigation datum for $F$. We have
$\langle u,\tilde{y}\rangle_{\tilde{y}}^{\tilde{F}}=0$ when
$\langle u,y\rangle_y^F=0$. So (\ref{0000}) can also be given
as
$$\langle u,u\rangle_{\tilde{F}}^{\tilde{y}}
=\frac{\langle u,u\rangle_y^F}{1+\langle y,v\rangle_y^F}.$$

By Lemma \ref{lemma-on-navigation}, we have
\begin{eqnarray*}
g_{ij}(\tilde{x}^1,\tilde{x}^2,\ldots,\tilde{x}^n)&=&
h_{ij}(x^1,x^2,\ldots,x^n)\\
&=&
h_{ij}(\tilde{x}^1,\tilde{x}^2+\tilde{x}^1,
\tilde{x}^3,\ldots,\tilde{x}^n)\\
&=&h_{ij}(\tilde{x}^1,\tilde{x}^2,\tilde{x}^3,\ldots,\tilde{x}^n),
\end{eqnarray*}
where $i,j>1$.

To summarize, we see that $g^F_{\mathbf{n}_1}$ has the same local presenting as $h$, except that all $x^i$'s are changed to
$\tilde{x}^i$'s respectively, and by similar argument, we can prove so does $g^F_{\mathbf{n}_2}$. All three metrics, $h$, $g^F_{\mathbf{n}_1}$ and $g^F_{\mathbf{n}_2}$ are Riemannian metrics of the same positive
constant curvature. By the isoparametric condition, the level sets of $f$ in $\mathcal{U}$
has constant principal curvatures with respect to $g^F_{\mathbf{n}_1}$, thus the statement is valid with $g^F_{\mathbf{n}_1}$ changed to the other two, i.e. $f$
is isoparametric for $(S^n,h)$, and $-f$ is isoparametric for
$(S^n,F)$.

Finally, we consider an isoparametric function $f$ for $(S^n,h)$
which is preserved by the flow generated by $W$, where $(h,W)$
is the navigation datum for a Randers sphere $(S^n,F)$ of constant flag
curvature. With very minor changes, above argument also proves
both $f$ and $-f$ are isoparametric for $(S^n,F)$.

Summarizing above arguments, we have the following theorem.
\begin{theorem}\label{main-thm-1}
Let $(S^n,F)$ be a Randers sphere of constant flag curvature, with the navigation datum $(h,W)$. Then any isoparametric function
$f$ of $(S^n,h)$ which is preserved by the flow generated by $W$ is isoparametric for $(S^n,F)$.

Conversely, if $f$ is isoparametric and $-f$ is transnormal for $(S^n,F)$, then
$f$ is isoparametric for $(S^n,h)$ and preserved by the flow
generated by $W$. Furthermore, $-f$ is also parametric for
$(S^n,F)$.
\end{theorem}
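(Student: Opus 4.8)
The plan is to assemble Theorem~\ref{main-thm-1} from the four lemmas already developed, since the bulk of the analytic work has been done and what remains is mostly bookkeeping and a careful treatment of the two directions.

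For the forward direction, suppose $f$ is isoparametric for $(S^n,h)$ and preserved by the flow of $W$. First I would record that "preserved by the flow of $W$" means $f_*W\equiv 0$ on $\mathcal{U}$, so $W$ is tangent to every regular level set of $f$; since $W$ is Killing for $h$, it commutes with $\mathbf{n}=\nabla^h f/|\nabla^h f|$, and one gets $[\mathbf{n},W]=0$. I would then build exactly the two local coordinate systems used in the lemmas: coordinates $(x^i)$ with $\partial_{x^1}=\mathbf{n}$, $\partial_{x^2}=W$, level sets $x^1\equiv\mathrm{const}$, and $h$ in the block form $\sqrt{(dx^1)^2+\sum_{i,j>1}h_{ij}dx^idx^j}$ with $h_{ij}$ independent of $x^2$; and the sheared coordinates $\tilde{x}^2=x^2-x^1$, $\tilde{x}^i=x^i$ otherwise, so that $\partial_{\tilde{x}^1}=\mathbf{n}+W=\mathbf{n}_1$. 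The transnormality of $f$ for $F$ is immediate from the block form of $g^F_{\mathbf{n}_1}$. To get the isoparametric property I invoke Lemma~\ref{lemma-on-navigation}: since $\langle W,\mathbf{n}_1\rangle^h_{\mathbf{n}_1}=0$ (as $W$ is tangent to the level set and $\mathbf{n}_1$ is $h$-normal... more precisely one checks the hypothesis $\langle v,y\rangle^F_y=0$ holds here with the roles set up so that the navigation vector is $h$-orthogonal to $\mathbf{n}_1$), the fundamental tensor components $g_{ij}=g^F_{ij}(x,\mathbf{n}_1)$ for $i,j>1$ equal $h_{ij}$, and the shear computation $h_{ij}(\tilde{x}^1,\tilde{x}^2+\tilde{x}^1,\ldots)=h_{ij}(\tilde{x}^1,\tilde{x}^2,\ldots)$ (using $x^2$-independence) shows $g^F_{\mathbf{n}_1}$ has the identical block presentation as $h$ in the tilde coordinates. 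Hence $g^F_{\mathbf{n}_1}$ is Riemannian of the same positive constant curvature, the level sets have the same (constant) principal curvatures with respect to it, and $\Delta f$ (the $g^F_{\mathbf{n}_1}$-Laplacian) depends only on $f$; so $f$ is isoparametric for $F$. The symmetric argument with $\mathbf{n}_2=-\mathbf{n}+W$ and the coordinate change $\tilde{x}^2=x^2+x^1$ gives that $-f$ is isoparametric for $F$ as well.

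For the converse, assume $f$ is isoparametric and $-f$ transnormal for $(S^n,F)$. Then Lemma~\ref{tangent-lemma} applies verbatim: $f$ is preserved by the flow of $W$. Next I run the third lemma's argument (the one proving "$f$ isoparametric for $F$ and $\pm f$ transnormal $\Rightarrow f$ isoparametric for $h$"): using that $W$ is tangent to the level sets, set up the same coordinates, note $\mathbf{n}_1=\mathbf{n}+W$, $\mathbf{n}_2=-\mathbf{n}+W$, and apply Lemma~\ref{lemma-on-navigation} in the reverse reading (recall the navigation datum $(\tilde F,-v)$ also recovers $F$, so $h$ and $g^F_{\mathbf{n}_i}$ play symmetric roles) to conclude $h$, $g^F_{\mathbf{n}_1}$, $g^F_{\mathbf{n}_2}$ all have the same block presentation and are Riemannian of the same positive constant curvature. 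Constancy of the principal curvatures of the level sets with respect to $g^F_{\mathbf{n}_1}$ therefore transfers to $h$, so $f$ is isoparametric for $h$; and simultaneously transfers to $g^F_{\mathbf{n}_2}$, giving that $-f$ is isoparametric for $F$.

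I expect the main obstacle to be purely expository rather than mathematical: the content of all three directions (forward, converse, and "furthermore $-f$") is already contained in Lemmas~\ref{easy-lemma-for-double-transnormal}--\ref{lemma-on-navigation} and the running argument preceding the theorem, so the delicate point is to state precisely once and for all \emph{why} the orthogonality hypothesis $\langle v,y\rangle^F_y=0$ in Lemma~\ref{lemma-on-navigation} is met in each application (it is because $W$ is tangent to the level set while $\mathbf{n}_1,\mathbf{n}_2$ are the relevant normals, and because $F(W)<1$ keeps $h_{22}<1$ so everything stays Riemannian), and then to note that the forward direction is obtained from the converse's argument "with very minor changes" — running the coordinate construction starting from an $h$-isoparametric $W$-invariant $f$ instead of an $F$-isoparametric one. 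I would therefore write the proof as: (i) forward direction via the two coordinate systems and Lemma~\ref{lemma-on-navigation}; (ii) converse via Lemma~\ref{tangent-lemma} plus the same block-form comparison read backwards; (iii) the $-f$ statement as a byproduct of the $\mathbf{n}_2$ computation in either direction.
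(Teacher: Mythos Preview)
Your proposal is correct and follows essentially the same route as the paper: Lemma~\ref{tangent-lemma} for the converse direction, the two coordinate systems $(x^i)$ and $(\tilde{x}^i)$, and Lemma~\ref{lemma-on-navigation} to identify the block presentations of $h$, $g^F_{\mathbf{n}_1}$, $g^F_{\mathbf{n}_2}$, with the forward direction obtained by running the same computation ``with very minor changes.'' The one imprecision to fix is the orthogonality check: in the application of Lemma~\ref{lemma-on-navigation} the relevant condition is $\langle W,\mathbf{n}\rangle^h=0$ (i.e.\ $W$ is $h$-orthogonal to $\mathbf{n}$, not to $\mathbf{n}_1$), since in the lemma's notation $y=\mathbf{n}$, $v=W$, and $\tilde{y}=\mathbf{n}_1$.
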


Theorem \ref{main-thm-1} implies when the ambient space is an Randers sphere of constant flag curvature, the special  isoparametric hypersurfaces purposed in last section are just those in the classification list in the Riemannian context
which are tangent to the vector field $W$ in the navigation datum, or equivalently, preserved by the flow generated by $W$.
We will see in the next two sections, that each isoparametric hypersurfaces in a unit sphere permits navigation changes for the metric, i.e. Killing vector field tangent to it. Further more, all the Killing vector field $W$ tangent to an isoparametric hypersurface for the standard round sphere $(S^n,h)$ (i.e. all the Randers metrics $F$ of constant flag curvature permitting these special isoparametric hypersurfaces)
can be determined except the case of the OT-FKM type with multiplicites $(m_1,m_2)=(8,7)$.

\section{Homogeneous isoparametric hypersurface}

Assume $(S^n,F)$ is a Randers sphere of constant flag curvature, and $(h,W)$ is its navigation datum.
Without loss of generalities, we may assume
$(S^n,h)$ is the unit sphere $S^n(1)$ and $W$ is a nonzero Killing vector
field for it. The connected isometry group $I_o(S^n,F)$ of $(S^n,F)$ is the proper subgroup
of $I_o(S^n,h)=SO(n+1)$ which preserves $W$, or equivalently
commutes with $W$ when $W$ is viewed as a matrix in $so(n+1)$.

Assume
$M$ is a connected homogeneous isoparametric hypersurface
of $S^n(1)=(S^n,h)$. We denote $G=I_o(S^n,M,h)$ and $K=I_o(S^n,M,F)\subset G$ the maximal connected subgroup of
$SO(n+1)$ and $I(S^n,F)$ respectively, preserving $M$.
The homogeneity here, in the Riemannian context, means that $G$ acts transitively on $M$.
All $G$'s are classified by the following table, which coincides with
the isotropy actions of compact rank two symmetric spaces \cite{HL}.

\begin{table}
  \centering
  \begin{tabular}{|l|l|l|l|l|}
  \hline
  Case & $G=I_o(S^n,M,h)$ & $\dim M$  & $g$ & $(m_1,m_2)$ \\ \hline
  1 & $SO(n)$ & $n$ &  $1$ & $(n,n)$ \\ \hline
  2 & $SO(p)\times SO(n+1-p)$ & $n$ & $2$ & $(p,n-p)$ \\ \hline
  3 & $SO(3)$ & $3$ & $3$ & $(1,1)$ \\ \hline
  4 & $SU(3)$ & $6$ & $3$ & $(2,2)$ \\ \hline
  5 & $Sp(3)$ & $12$ & $3$ & $(4,4)$ \\ \hline
  6 & $F_4$ & $24$ & $3$ & $(8,8)$ \\ \hline
  7 & $SO(5)$ & $8$ & $4$ & $(2,2)$ \\ \hline
  8 & $U(5)$ & $18$ & $4$ & $(4,5)$ \\ \hline
  9 & $SO(m)\times SO(2)$, $m\geq 3$ & $2m-2$ & $4$ & $(1,m-2)$ \\ \hline
  10& $S(U(m)\times U(2))$, $m\geq 2$ & $4m-2$ & $4$ & $(2,2m-3)$ \\ \hline
  11& $Sp(m)\times Sp(2)$, $m\geq 2$ & $8m-2$ & $4$ & $(4,4m-5)$ \\ \hline
  12& $(Spin(10)\times SO(2))/\mathbb{Z}_4$ & $30$ & $4$ & $(6,9)$ \\ \hline
  13& $SO(4)$ & $6$ & $6$ & $(1,1)$ \\ \hline
  14& $G_2$ & $12$ & $6$ & $(2,2)$ \\
  \hline
\end{tabular}
\end{table}
\bigskip

The metric $h$ defines a norm $||\cdot||_h$
on $\mathfrak{g}=\mathrm{Lie}(G)$ such that
$$||W||_h=\max_{x\in S^n(1)}|W(x)|_h$$
for each $W\in\mathfrak{g}=\mathrm{Lie}(G)$ viewed as Killing vector field of
$S^n(1)$. Viewing $W$ as a matrix in $so(n+1)$ instead, then we have
\begin{eqnarray*}
||W||_h&=&\max\{\langle Wx,Wx\rangle\mbox{ for all }x\in S^n(1)\subset\mathbb{R}^{n+1}\}\\
&=&\max\{c|\pm c\sqrt{-1}\mbox{ are eigenvalues of }W\},
\end{eqnarray*}
where $\langle\cdot,\cdot\rangle$ is the standard Euclidean inner product.

Theorem \ref{main-thm-1} indicates that, if we
choose
the Killing vector field $W$ from the open $||\cdot||_h$-unit ball in $\mathfrak{g}$, i.e. $||W||_h<1$, $M$ is tangent to $W$ and remains isoparametric after the navigation.

The subtle issue here is that $M$ may not be $K$-homogeneous, i.e. a homogeneous hypersurface of $(S^n,F)$.
The following lemma indicates any $K$-homogeneous hypersurface in $(S^n,F)$ are tangent to $W$, i.e. an isoparametric hypersurface indicated in Theorem \ref{main-thm-1}.
\begin{lemma}\label{homogeneous-lemma}
If $M$ is $K$-homogeneous, then it is tangent to the vector field $W$ in the navigation datum.
\end{lemma}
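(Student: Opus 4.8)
The plan is to prove that the continuous function $\rho$ on $M$ defined by $\rho(p)=h_p(W(p),\xi(p))$ vanishes identically, where $\xi$ denotes the $h$-unit normal field along $M$; this is exactly the statement that $W(p)\in T_pM$ for every $p\in M$, i.e. that $M$ is tangent to $W$. Since $M$, being a homogeneous (hence a regular level set $f^{-1}(c)$ of an) isoparametric function $f$ of $(S^n,h)$, carries the globally defined unit normal $\xi=\nabla^h f/|\nabla^h f|$, the function $\rho$ makes sense, and the argument splits into two steps: first, $\rho$ is constant on $M$; second, that constant is $0$.

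For the first step I would exploit that $W$ is $K$-invariant and that $K$ acts transitively on $M$. Writing $W(x)=Ax$ with $A\in so(n+1)$ and recalling that $K\subseteq I_o(S^n,F)$ consists of those $k\in SO(n+1)$ commuting with $A$, one gets $(k_*W)(x)=kAk^{-1}x=Ax=W(x)$, so $k_*W=W$ for all $k\in K$. Each $k\in K$ is also an $h$-isometry preserving $M$, hence preserves the normal line bundle of $M$; since $K$ is connected and $TM^\perp$ is trivialized by $\xi$, this forces $k_*\xi=\xi$. Then $\rho(k\cdot p)=h_{kp}(k_*W(p),k_*\xi(p))=h_p(W(p),\xi(p))=\rho(p)$ because $k$ is an $h$-isometry, so $\rho$ is $K$-invariant; as $M$ is $K$-homogeneous, $K$ acts transitively on $M$, and therefore $\rho\equiv c$ for some constant $c$.

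For the second step I would rerun the volume argument from the proof of Lemma \ref{tangent-lemma}. Since $W$ is a Killing field of $(S^n,h)$ we have $\operatorname{div}_h W=0$; and since $M=f^{-1}(c)$ separates $S^n$, the set $\Omega=\{f\le c\}$ is a compact domain with $\partial\Omega=M$ whose outward unit normal is exactly $\xi=\nabla^h f/|\nabla^h f|$. The divergence theorem then gives $0=\int_\Omega\operatorname{div}_h W\,dv_h=\int_M h(W,\xi)\,dA=c\cdot\operatorname{vol}_h(M)$, and since $\operatorname{vol}_h(M)>0$ we conclude $c=0$. Hence $\rho\equiv 0$, which is what we wanted.

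The one genuinely substantive input, as in Lemma \ref{tangent-lemma}, is the implication ``$W$ Killing for $h$ $\Rightarrow$ its flow preserves the $h$-volume''; here it is combined with $K$-homogeneity, through the invariant function $\rho$, to upgrade the mere vanishing of $\int_M\rho$ to pointwise vanishing. The remaining points are bookkeeping and should cause no trouble: that $M$ is two-sided and separating (immediate from $M=f^{-1}(c)$ with $c$ a regular value), that the chosen unit normal is genuinely $K$-invariant rather than only $K$-invariant up to sign (forced by connectedness of $K$), and that isometries of $(S^n,F)$ are isometries of $(S^n,h)$ commuting with $W$ (already recorded in the discussion preceding the lemma).
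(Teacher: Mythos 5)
Your proof is correct, but it takes a genuinely different route from the paper's. The paper argues Lie-theoretically: tangency of $W$ to the $K$-orbit $M$ amounts to $W\in\mathfrak{k}=\mathrm{Lie}(K)$, so it assumes $W\notin\mathfrak{k}$, notes that the closure $K'$ of the subgroup generated by $\mathbb{R}W\oplus\mathfrak{k}$ then acts transitively on $S^n$ (its orbit through a point of $M$ where $W$ is transverse is open and closed), and invokes the Borel and Montgomery--Samelson classification of transitive actions on spheres: since the semisimple parts of $\mathfrak{k}'$ and $\mathfrak{k}$ coincide, that semisimple part would have to be $su(m)$ or $sp(m')$, which already acts transitively on $S^n$, contradicting that $K$ preserves the hypersurface $M$. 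You instead prove tangency directly and quantitatively: $\rho=h(W,\xi)$ is $K$-invariant (using $k_*W=W$, that $k$ is an $h$-isometry, and connectedness to fix the sign of $k_*\xi$), hence constant by transitivity, and the constant vanishes by the divergence theorem applied to the divergence-free Killing field $W$ on $\{f\le c\}$. Your argument is more elementary and self-contained: it needs no classification of transitive sphere actions, and it would work verbatim for any connected separating homogeneous hypersurface whose symmetry group consists of $h$-isometries fixing $W$; the paper's argument, on the other hand, produces the membership $W\in\mathfrak{k}$ in exactly the algebraic form used in the subsequent case-by-case analysis (though this is in fact equivalent to tangency, since the flow of a Killing field everywhere tangent to the closed hypersurface $M$ preserves it, so your conclusion recovers it). Two small points worth making explicit in your write-up: the sign argument for $k_*\xi=\xi$ uses connectedness of $M$ (so the sign is constant along $M$ for fixed $k$) as well as connectedness of $K$, and the identity $\partial\{f\le c\}=M$ uses that the regular level set $f^{-1}(c)$ is precisely the connected hypersurface $M$.
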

\begin{proof}
When $W$ is viewed as a matrix in $\mathfrak{g}\subset so(n+1)$, it commutes
with $\mathfrak{k}=\mathrm{Lie}(K)$.
We only need to prove $W\in\mathfrak{k}$. Assume conversely
that $W\notin\mathfrak{k}$, then the closure
$K'$ in $G$ for the group generated by the Lie subalgebra $\mathbb{R}W\oplus\mathfrak{k}$ (which is a closed subgroup of $G$) acts transitively on $S^n$. The semi-simple part of $\mathfrak{k}'=\mathrm{Lie}(K')$ coincides with that of $\mathfrak{k}$. According to the classification of effective
transitive group actions on spheres \cite{Bo1940}
\cite{MS1943}, the semi-simple part of $\mathfrak{k}$ must be  $su(m)$ when $n+1=2m$ is an even number,
or $sp(m')$ when $n+1=4m'$ can be divided by 4. In either
case, $K$ acts transitively on $S^n$, which can not preserve
$M$. This is a contradiction which ends the proof of the lemma.
\end{proof}

The remaining task of this section is to classify all
$K$-homogeneous hypersurfaces, i.e. all homogeneous isoparametric hypersurfaces of $(S^n,F)$.

Assume $M\subset S^n$ is a $K$-homogeneous hypersurface.
By a suitable orthogonal-conjugation, we can identify the
Killing vector field $W$ with the matrix
$$W=\mathrm{diag}(0_{n_0},\lambda_1 J_{2n_1},\ldots,\lambda_k J_{2n_k}),$$
where $0<\lambda_1<\lambda_2<\cdots<\lambda_k<1$, $0_{n_0}$ is the $n_0\times n_0$-zero matrix, and $J_{2n_i}$ is the skew symmetric $2n_i\times 2n_i$-matrix
 $J_{2n_i}=\left(
                \begin{array}{cc}
                  0 & I_{n_i} \\
                  -I_{n_i} & 0 \\
                \end{array}
              \right)
$, $n_i>0$ when $i>0$, and $n_0$ can be 0.

When $n_0>0$ and $k=2$, or $k>2$, the action of $I_o(S^n,F)$ on $S^n$ has a cohomogeneity bigger than 1. So in this case, $M$ can not be $K$-homogeneous.

When $n_0=0$ and $k=2$, or $n_0>1$ and $k=1$, or $n_0=1$ and $k=1$, $I_o(S^n,F)=U(m_1)\times U(m_2)$ with $2m_1+2m_2=n+1$, or $SO(n_0)\times U(m_1)$ with $n_0+2m_1=n+1$, or $U(m)$ with $2m+1=n$ respectively.
In either case, $W$ is in the center of $\mathrm{Lie}(I_o(S^n,F))$, and $K=I_o(S^n,F)$. The action of $I_o(S^n,F)$ on $S^n$ is of cohomogeneity one, so the $K$-homogeneous $M$ must be an orbit of it, which is an isoparametric hypersurface in $S^n(1)$ with 1 or 2 distinct principal curvatures.

When $n_0=0$ and $k=1$,  $I_o(S^n,F)=U(m)$ with $2m=n+1$ acts transitively on $S^n(1)$. In this case, $W$ is a {\it Killing vector field of constant length} (or {\it CK-vector field} in short) on $S^n(1)$. Notice $K$ is the connected centralizer of $W$ in $G$. The assumption that $M$ is $K$-homogeneous implies that if we present $M$ as the $G$-homogeneous space $M=G/H$ with
$\mathfrak{h}=\mathrm{Lie}(H)$, then
\begin{equation}\label{0001}
\mathfrak{g}=\mathfrak{h}+\mathfrak{k}.
\end{equation}
This equality has been studied by A. L. Oniscik \cite{On1962}. A more geometric way to interprete it, which will be applied in later discussions, is that the restriction of $W$ to $M$ defines a nonzero Killing vector field of constant length (or CK-vector field in short) on $G/H$, with respect to all $G$-homogeneous metrics. In particular, we can choose the Riemannian $G$-normal homogeneous metric for $G/H$.

This observation is valid for all hypersurfaces and focal manifolds in the isoparametric foliation associated to $M$.
Assume $M$ is a regular
level set for the
isoparametric function $f:S^n(1)\rightarrow[-1,1]$ such that $f^{-1}(\pm 1)$ are the two focal submanifold. Then for each $t\in[-1,1]$, $K$ preserves and acts transitively on the level set $f^{-1}(t)$. If we present $f^{-1}(t)=G/H_t$ with $\mathrm{Lie}(H_t)=\mathfrak{h}_t$, then the restriction of $W$ to each
$f^{-1}(t)$ defines a CK-vector field on $G/H_t$, with respect to all $G$-homogeneous metrics, the Riemannian $G$-normal
homogeneous ones.

We will use the following two results for CK-vector fields on Riemannian normal homogeneous spaces.

\begin{theorem}
\label{normal-homogeneity-CK-vector-field-simple-case}
Let $G$ be a compact connected simple Lie group and $H$ a closed subgroup with $0<\dim H<\dim G$. Fix a Riemannian normal
metric on $G/H$. Suppose that there is a nonzero vector $v\in\mathfrak{g}$ defining a CK-vector field on $G/H$. Then
up to a finite cover, the only possibilities of $G/H$ are
$S^{2n-1}=SO(2n)/SO(2n-1)$, $S^7=Spin(7)/G_2$ and $SU(2n)/Sp(n)$.
\end{theorem}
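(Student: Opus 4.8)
The plan is to reduce the statement to a classification of which compact connected simple Lie groups $G$, together with a closed subgroup $H$ with $0<\dim H<\dim G$, admit a normal homogeneous metric on $G/H$ carrying a nonzero CK-vector field coming from an element $v\in\mathfrak g$. I would first recall the infinitesimal criterion for $v$ to generate a CK-vector field on a Riemannian normal homogeneous space $G/H$: if $B$ denotes the bi-invariant inner product on $\mathfrak g$ defining the normal metric and $\mathfrak g=\mathfrak h\oplus\mathfrak m$ is the $B$-orthogonal reductive decomposition, then the Killing field generated by $v$ has at the point $eH$ the length $|{\rm pr}_{\mathfrak m}(\mathrm{Ad}(g)^{-1}v)|_B$ at $gH$; constancy of this length over all $g\in G$ is the defining equation. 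The first key step is to translate this into the statement that the function $g\mapsto |{\rm pr}_{\mathfrak h}(\mathrm{Ad}(g)^{-1}v)|_B^2$ is constant (since $|\mathrm{Ad}(g)^{-1}v|_B$ is already constant, being an $\mathrm{Ad}$-invariant), i.e. that the $\mathrm{Ad}(G)$-orbit of $v$ meets every translate of $\mathfrak h$ in a ``sphere'' of fixed radius. This is exactly the condition analyzed by Onishchik and by the CK-vector-field literature \cite{WPX2017} \cite{XW2016}.

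Next I would invoke the structural consequences of this condition. The centralizer $\mathfrak z_{\mathfrak g}(v)$ must act transitively (infinitesimally) together with $\mathfrak h$ on $G$, i.e. $\mathfrak g=\mathfrak h+\mathfrak z_{\mathfrak g}(v)$; combined with the fact that $\langle\exp(tv)\rangle$-translates of the normal metric give a one-parameter family of isometries with $v$ of constant length, one gets that the closure of $\{\exp(tv)\}\cdot H$ acts transitively on $G/H$ with the orbits of a torus. The crucial input is the classification of pairs $(\mathfrak g,\mathfrak h)$ with $\mathfrak g=\mathfrak h+\mathfrak k$ for $\mathfrak k$ the centralizer of a torus, due to Onishchik \cite{On1962}, refined in the CK setting in \cite{WPX2017} \cite{XW2016}: for $\mathfrak g$ simple this forces $G/H$ (up to finite cover) into a very short list. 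The three survivors — $S^{2n-1}=SO(2n)/SO(2n-1)$ with $v$ a regular element of an $\mathfrak{so}(2)$-factor acting as the Hopf field, $S^7=Spin(7)/G_2$ with $v$ spanning the center of an $SU(2)$ realizing the ``second'' Hopf fibration, and $SU(2n)/Sp(n)$ with $v$ the generator of the central circle of a $U(2n)\subset$ ... pulled into $SU(2n)$ — are then verified directly to carry such a $v$, by exhibiting in each case a subgroup $Z\cong U(1)$ with $Z\cdot H$ transitive and $\mathrm{Ad}(Z)$ acting on $\mathfrak m$ so that the ``vertical'' component length is constant; these are the standard CK examples and the verification is a short bi-invariant-inner-product computation.

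The main obstacle I expect is the exclusion step: ruling out all other simple $G/H$. One cannot simply cite ``transitive actions on spheres'' because $G/H$ need not be a sphere, so the argument must genuinely use the normal-metric constraint, not just topology. The right tool is the Onishchik decomposition $\mathfrak g=\mathfrak h+\mathfrak k$ with $\mathfrak k=\mathfrak z_{\mathfrak g}(v)$ of maximal rank; enumerating such decompositions for each simple $\mathfrak g$ (using that $\mathfrak k$ is the fixed-point set of an inner automorphism, hence pseudo-Levi) and then checking the additional CK length-constancy condition — which is strictly stronger than $\mathfrak g=\mathfrak h+\mathfrak k$ — eliminates all but the three listed families. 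I would organize this as a case analysis over the type of $\mathfrak g$ and the centralizer type of $v$, leaning on \cite{On1962}, \cite{WPX2017}, and \cite{XW2016} for the bookkeeping, and present only the surviving cases in detail. The finite-cover caveat absorbs the ambiguity between $SO$, $Spin$, and their quotients, so no sharper statement is claimed.
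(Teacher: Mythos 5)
First, a point of comparison with the paper: the paper does not prove this theorem at all. Immediately after Proposition \ref{normal-homogeneity-CK-vector-field-semi-simple-case} it states that Theorem \ref{normal-homogeneity-CK-vector-field-simple-case} is a restatement of Theorem 1.1 of \cite{XW2016}, so within the paper the statement is an imported result and the only thing to check is the accuracy of the quotation. Your proposal instead attempts a proof sketch, and judged on its own terms it has a genuine gap at its decisive step.

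Your setup is correct: for the normal metric the length of the Killing field induced by $v$ at $gH$ is $|\mathrm{pr}_{\mathfrak{m}}(\mathrm{Ad}(g)^{-1}v)|$, so constancy of length is equivalent to constancy of $g\mapsto|\mathrm{pr}_{\mathfrak{h}}(\mathrm{Ad}(g)^{-1}v)|$. The gap is the next assertion, that this forces $\mathfrak{g}=\mathfrak{h}+\mathfrak{z}_{\mathfrak{g}}(v)$, after which Onishchik's factorization tables could take over. That implication is not formal, and the justification you offer does not work: $\exp(tv)$ acts by isometries of every $G$-invariant metric whether or not $X_v$ has constant length, and $\{\exp(tv)\}\cdot H$ is not a subgroup, so ``its closure acts transitively'' has no basis. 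The easy implication goes the other way: if $Z_G(v)\cdot H=G$ then the $Z_G(v)$-invariant length function is constant --- this is exactly how the present paper uses the condition (\ref{0001}) in Section 5. That the converse cannot be a soft statement is shown by the case $H=\{e\}$: on $G$ with a bi-invariant metric every $X_v$ has constant length while $\mathfrak{z}_{\mathfrak{g}}(v)\neq\mathfrak{g}$ for non-central $v$; so a proof of your claim under $\dim H>0$ must do genuine work with $H$. Differentiating the constancy only yields the pointwise identity $[\mathrm{pr}_{\mathfrak{h}}(w),w]=0$ for every $w\in\mathrm{Ad}(G)v$, and getting from this to the three listed quotients is precisely the content of \cite{XW2016}; deferring that ``bookkeeping'' to \cite{XW2016} and \cite{WPX2017} is circular, since the statement being proved \emph{is} Theorem 1.1 of the former. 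The only complete part of your sketch is the easy direction: exhibiting, for each of $SO(2n)/SO(2n-1)$, $Spin(7)/G_2$ and $SU(2n)/Sp(n)$, a circle whose centralizer together with $H$ acts transitively, which indeed produces the CK-vector fields.
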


\begin{proposition} \label{normal-homogeneity-CK-vector-field-semi-simple-case}
Assume $G/H$ is an
irreducible Riemannian normal
homogeneous space with $G$ compact and semisimple. Denote
$$\mathfrak{g}=\mathfrak{g}_1\oplus\cdots\oplus\mathfrak{g}_r$$
the direct sum
decomposition of $\mathfrak{g}=\mathrm{Lie}(G)$, where each $\mathfrak{g}_i$ is simple. Denote $\pi_i$ the projection from
$\mathfrak{g}$ to $\mathfrak{g}_i$ according to this decomposition. Assume for each $i$, the $\pi_i$-image of
$\mathfrak{h}=\mathrm{Lie}(H)$ has a dimension strictly between 0 and $\dim \mathfrak{g}_i$, and $\xi=\xi_1+\cdots+\xi_r
\in\mathfrak{g}$ defines a CK-vector field on the normal
homogeneous space $G/H$. Then for each $i$, $\xi_i$ defines
a CK-vector field on the Riemannian normal homogeneous space
$G_i/H_i$ where $\mathrm{Lie}(G_i)=\mathfrak{g}_i$ and
$\mathrm{Lie}(H_i)=\pi_i(\mathfrak{h})$.
\end{proposition}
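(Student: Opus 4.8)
The plan is to reduce everything to the standard algebraic description of Killing vector fields of constant length on a Riemannian normal homogeneous space and then to strip off one simple factor at a time. Fix the bi‑invariant inner product $Q$ on $\mathfrak{g}$ inducing the normal metric; since $\mathfrak{g}$ is compact and semisimple, $Q=Q_1\oplus\cdots\oplus Q_r$ is block diagonal with each $Q_i$ bi‑invariant on $\mathfrak{g}_i$, and $\mathrm{Ad}(G)$ acts factor by factor. Write $\mathfrak{g}=\mathfrak{h}\oplus\mathfrak{m}$ for the $Q$‑orthogonal reductive decomposition. The first ingredient is the criterion of \cite{XW2016} \cite{WPX2017}: the induced vector field of $v\in\mathfrak{g}$ has length $|\mathrm{pr}_{\mathfrak{m}}\mathrm{Ad}(g^{-1})v|_{Q}$ at $gH$, so, since $|\mathrm{pr}_{\mathfrak{m}}\eta|_{Q}^{2}+|\mathrm{pr}_{\mathfrak{h}}\eta|_{Q}^{2}=|v|_{Q}^{2}$ is already constant on the adjoint orbit $\mathcal{O}_v=\mathrm{Ad}(G)v$, the vector $v$ defines a CK‑vector field on $(G/H,Q)$ iff $|\mathrm{pr}_{\mathfrak{h}}\eta|_{Q}$ is constant on $\mathcal{O}_v$ (equivalently, differentiating, iff $[\mathrm{pr}_{\mathfrak{h}}\eta,\mathrm{pr}_{\mathfrak{m}}\eta]=0$ for every $\eta\in\mathcal{O}_v$). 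Applying this to $G/H$ with $v=\xi$ and to each $G_i/H_i$ with $v=\xi_i$, and using $\mathcal{O}_\xi=\mathcal{O}_1\times\cdots\times\mathcal{O}_r$ with $\mathcal{O}_i=\mathrm{Ad}(G_i)\xi_i\subset\mathfrak{g}_i$, the assertion to be proved becomes: constancy of $|\mathrm{pr}_{\mathfrak{h}}\eta|_{Q}^{2}$ on $\mathcal{O}_\xi$ implies constancy of $|\mathrm{pr}_{\pi_i(\mathfrak{h})}\eta_i|_{Q_i}^{2}$ on $\mathcal{O}_i$ for each $i$.

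The second step isolates the $i$‑th factor. For $\eta=\sum_j\eta_j\in\mathcal{O}_\xi$ expand $|\mathrm{pr}_{\mathfrak{h}}\eta|_{Q}^{2}=\sum_{j,k}Q(\mathrm{pr}_{\mathfrak{h}}\eta_j,\mathrm{pr}_{\mathfrak{h}}\eta_k)$. Fixing all $\eta_j$ with $j\neq i$ and varying $\eta_i$ over $\mathcal{O}_i$, constancy says that $\eta_i\mapsto|\mathrm{pr}_{\mathfrak{h}}\eta_i|_{Q}^{2}+2Q(\mathrm{pr}_{\mathfrak{h}}\eta_i,w)$ is constant on $\mathcal{O}_i$, where $w=\sum_{j\neq i}\mathrm{pr}_{\mathfrak{h}}\eta_j$. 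Comparing two choices of the fixed factors and subtracting, $\eta_i\mapsto Q(\mathrm{pr}_{\mathfrak{h}}\eta_i,v)$ is constant on $\mathcal{O}_i$ for every $v$ in the linear span of the corresponding differences $w-w'$. Since the adjoint orbit of a nonzero element of a simple Lie algebra linearly spans that algebra (its span is a nonzero ideal), this span equals $\sum_{j\neq i}\mathrm{pr}_{\mathfrak{h}}(\mathfrak{g}_j)$, which contains the vector $w$ coming from the reference choice $\eta_j=\xi_j$. Hence the cross term is itself constant, and $|\mathrm{pr}_{\mathfrak{h}}\eta_i|_{Q}^{2}$ is constant on $\mathcal{O}_i$.

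The third step transfers this from the ambient projection onto $\mathfrak{h}$ to the intrinsic projection of $\mathfrak{g}_i$ onto $\bar{\mathfrak{h}}_i:=\pi_i(\mathfrak{h})$. Put $\mathfrak{k}=\mathfrak{h}\cap\bigoplus_{j\neq i}\mathfrak{g}_j$; then $\mathfrak{h}=\mathfrak{k}\oplus\{\bar h+\tau(\bar h):\bar h\in\bar{\mathfrak{h}}_i\}$ orthogonally, where $\tau\colon\bar{\mathfrak{h}}_i\to\bigoplus_{j\neq i}\mathfrak{g}_j$ is linear, has image $Q$‑orthogonal to $\mathfrak{k}$, and satisfies the twisted‑homomorphism identity $\tau[\bar x,\bar y]=\mathrm{pr}_{\mathfrak{k}^{\perp}}[\tau\bar x,\tau\bar y]$ forced by $\mathfrak{h}$ being a subalgebra. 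A short minimization gives, for $\eta_i\in\mathfrak{g}_i$, that $\mathrm{pr}_{\mathfrak{h}}\eta_i=B^{-1}p+\tau(B^{-1}p)$ with $p=\mathrm{pr}_{\bar{\mathfrak{h}}_i}\eta_i$ and $B=\mathrm{Id}_{\bar{\mathfrak{h}}_i}+\tau^{*}\tau$, so that $|\mathrm{pr}_{\mathfrak{h}}\eta_i|_{Q}^{2}=\langle B^{-1}p,p\rangle_{Q_i}$. Using the twisted‑homomorphism identity together with the $\mathrm{ad}$‑invariance of $Q$ on $\bigoplus_{j\neq i}\mathfrak{g}_j$, one checks that $\tau^{*}\tau$ commutes with the adjoint action of $\bar{\mathfrak{h}}_i$ on itself, so by Schur's lemma it is a nonnegative scalar on every simple ideal of $[\bar{\mathfrak{h}}_i,\bar{\mathfrak{h}}_i]$, and $B$ restricts there to a positive scalar.

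The main obstacle is the last part of this third step: promoting ``$B$ is scalar on each simple ideal'' to the statement that constancy of $\eta_i\mapsto\langle B^{-1}p,p\rangle_{Q_i}$ on $\mathcal{O}_i$ forces constancy of $\eta_i\mapsto\langle p,p\rangle_{Q_i}=|\mathrm{pr}_{\bar{\mathfrak{h}}_i}\eta_i|_{Q_i}^{2}$. Two degeneracies must be handled — distinct scalars of $B$ on distinct simple ideals of $\bar{\mathfrak{h}}_i$, and the behaviour of $B$ on the center of $\bar{\mathfrak{h}}_i$ — and this is exactly where the hypotheses enter: the irreducibility of the normal homogeneous space $G/H$ prevents $\mathfrak{h}$ from splitting compatibly with the factors $\mathfrak{g}_j$ (and thereby restricts $\mathfrak{k}$, the center of $\mathfrak h$, and the possible intertwiners), while the bound $0<\dim\pi_i(\mathfrak{h})<\dim\mathfrak{g}_i$ keeps $\bar{\mathfrak{h}}_i$ a proper nonzero subalgebra, so the reduction to $G_i/H_i$ is nonvacuous. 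A finer exploitation of constancy on the whole orbit $\mathcal{O}_\xi$ (not just the single slice used in Step 2), which yields $Q(\mathrm{pr}_{\mathfrak{h}}\eta_i,v)$ constant on $\mathcal{O}_i$ for all $v\in\mathfrak{h}\ominus(\mathfrak{h}\cap\mathfrak{g}_i)$, should supply the extra linear constraints needed to separate the ideals. Once the transfer is established for each $i$, the criterion of the first step, now read on $G_i/H_i$, says precisely that $\xi_i$ induces a CK‑vector field there, which completes the proof.
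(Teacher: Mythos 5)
You are attempting to reconstruct a result the paper itself does not prove: the author simply quotes this proposition as (the Riemannian case of) Theorem 7.6 in \cite{WPX2017}. Measured against that, your first two steps are sound. The criterion that $v$ is CK on the normal space iff $|\mathrm{pr}_{\mathfrak{h}}\eta|_Q$ is constant on $\mathrm{Ad}(G)v$ is correct, and your slice-and-difference argument is valid: a linear functional constant on the full adjoint orbit of a nonzero element of a compact simple ideal would be centralized by that whole ideal, so the differences do span, and you legitimately obtain that $\eta_i\mapsto|\mathrm{pr}_{\mathfrak{h}}\eta_i|_{Q}^{2}$ is constant on $\mathcal{O}_i$, and even that $\eta_i\mapsto Q(\mathrm{pr}_{\mathfrak{h}}\eta_i,v)$ is constant for $v\in\mathrm{pr}_{\mathfrak{h}}(\oplus_{j\neq i}\mathfrak{g}_j)$. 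The graph description $\mathfrak{h}=\mathfrak{k}\oplus\{\bar h+\tau\bar h\}$, the identity $|\mathrm{pr}_{\mathfrak{h}}\eta_i|_{Q}^{2}=\langle B^{-1}p,p\rangle_{Q_i}$ with $B=\mathrm{Id}+\tau^{*}\tau$, and the fact that $\tau^{*}\tau$ intertwines $\mathrm{ad}(\pi_i(\mathfrak{h}))$ are also correct.

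However, the proof is not complete, and the missing piece is exactly the content of the proposition. What must be shown is that constancy of $\langle B^{-1}p,p\rangle_{Q_i}$ (the ambient $\mathfrak{h}$-projection) forces constancy of $\langle p,p\rangle_{Q_i}$ (the intrinsic $\pi_i(\mathfrak{h})$-projection); when $\tau\neq 0$ these quadratic forms genuinely differ, $B$ can have different Schur scalars on different simple ideals of $\pi_i(\mathfrak{h})$ and is essentially unconstrained on its center, and you yourself flag this and only assert that irreducibility of $G/H$ and a ``finer exploitation'' of the orbit constraints \emph{should} separate the ideals. That is a hope, not an argument: nothing in your text actually invokes the irreducibility hypothesis, and it is not evident that the extra linear constraints $Q_i(p,\pi_i\mathrm{pr}_{\mathfrak{h}}u)=\mathrm{const}$ (for $u$ in the other factors, which only see $B^{-1}\mathrm{Im}\,\tau^{*}$) control the eigenspace decomposition of $B$ on all of $\pi_i(\mathfrak{h})$. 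Until this transfer step is carried out, the argument reduces the proposition to an unproven assertion; this is precisely the nontrivial work done in Theorem 7.6 of \cite{WPX2017}, which the paper cites in lieu of a proof.
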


Theorem \ref{normal-homogeneity-CK-vector-field-simple-case}
is a restatement of Theorem 1.1 in \cite{XW2016}, and Proposition \ref{normal-homogeneity-CK-vector-field-semi-simple-case}
is part of Theorem 7.6 in \cite{WPX2017} for only the Riemannian case.

In the following, we discuss case by case for all fourteen possible choices of $G=I_o(S^n,M,h)$ in the table above.

In Case 1 with $G=SO(n)$, the focal manifolds are zero dimensional, which do not admit nonzero CK-vector fields.

In Case 2 with $G=SO(p)\times SO(n+1-p)$, the focal manifolds are
two unit spheres. So both $p$ and $n+1-p$ must be even, i.e. both
focal manifolds are odd dimensional spheres, so that they can admit
nonzero CK-vector fields. In this case $K=U(m_1)\times U(m_2)$
with $2m_1+2m_2=n+1$ and the CK-vector field $W$ is chosen from
the center of $\mathrm{Lie}(I_o(S^n,F))=u(m_1+m_2)$.

For the cases from Case 3 to Case 7, and Case 14, the group
$G$ is simple and $W\in\mathfrak{g}$ defines a nonzero CK-vector
field on the Riemannian normal homogeneous spaces $G/H_{\pm 1}$ (they are the focal submanifolds, but not endowed with the submanifold metric). It is a contradiction because they are not the ones listed in Theorem \ref{normal-homogeneity-CK-vector-field-simple-case}.

In Case 13, $G=SO(4)$ has the same dimension as $M$. So
its proper subgroup $K$ can not act transitively on $M$.

In Case 11 with $G=Sp(m)\times Sp(2)$ and $m\geq 2$, one of the focal manifold is $$(Sp(m)\times Sp(2)/(\Delta Sp(1) \times Sp(m-1)\times Sp(1)),$$
where $\Delta Sp(1)$ is a diagonal $Sp(1)$ in the product $Sp(m)\times Sp(2)$, and $Sp(m-1)$ and the other $Sp(1)$ are contained in $Sp(m)$ and $Sp(2)$ respectively. Theorem
\ref{normal-homogeneity-CK-vector-field-semi-simple-case}
indicates $W$ defines a nonzero CK-vector field on either  $\mathbb{H}\mathrm{P}^{m-1}=Sp(m)/Sp(m-1)Sp(1)$
or $\mathbb{H}\mathrm{P}^1=Sp(2)/Sp(1)Sp(1)$. But neither one admits
such a CK-vector field by
Theorem \ref{normal-homogeneity-CK-vector-field-simple-case}.

For all the remaining cases, $G$ contains a one-dimensional center. If we take $W$ from the center of $\mathfrak{g}$,
then obviously $K=G$ acts transitively on $M$, i.e. $M$ is
a homogeneous isoparametric hypersurface in $(S^n,F)$.
I will prove that $W$ can only be chosen
from the center. In the following cases, I will assume conversely
that $W$ is not in the center, and prove the claim by contradiction.

In Case 8 with $G=U(5)$, then the maximal possible dimension for $K$ is 17 (which is taken by $K=U(4)\times U(1)$), i.e. $K$ can not act
transitively on $M$ which dimension is 18.

In Case 9 with $G=SO(m)\times SO(2)$ and $m\geq 3$, we denote $M=G/H$ and
$H'$ is projection image of $H$ in the $SO(m)$-factor. Then
$K=K'\times SO(2)$, and we can get
$\mathfrak{k}'+\mathfrak{h}'=so(m)$
from (\ref{0001}), in which $\mathfrak{k}'=\mathrm{Lie}(K')$ and $\mathfrak{h}'=\mathrm{Lie}(H')$. So the $so(m)$-factor $W'$ of $W$
defines a nonzero CK-vector field on the Riemannian normal homogeneous space $SO(m)/H'$. Notice that
$2m-3\leq\dim SO(m)/H'\leq 2m-2$. When $m>4$, it is not listed in Theorem \ref{normal-homogeneity-CK-vector-field-simple-case}.
When $m=3$ or $4$, it can be directly checked as in Case 8 that $\dim K<\dim M$, i.e. $K=I_o(S^n,M,F)$
can not act transitively on $M$.

In Case 12 with $G=(Spin(10)\times SO(2))/\mathbb{Z}_4$ and $\dim M=30$, we can apply similar argument as for Case 9 with $m>4$ to get a contradiction.

In Case 10 with $G=S(U(m)\times U(2))$ and $m\geq 2$,
one of the focal manifold corresponds to the $G$-orbit in
$$\left(
    \begin{array}{cccc}
      1 & 0 & \cdots & 0 \\
      0 & 0 & \cdots & 0 \\
    \end{array}
  \right)\in\mathbb{C}_{2\times m},
$$
on which $U(m)$ acts on the columns and $U(2)$ acts on the rows. This focal manifold can be presented as $G/H$, where
$H=S(U(1)\times U(1))\times S(U(m-1)\times U(1))$, in which
the first and the third factors belong to $U(m)$ and the
other two factors belong to $U(2)$.
Denote $\pi$, $\pi_1$ and $\pi_2$ the orthogonal projection
from $\mathfrak{g}=\mathrm{Lie}(S(U(m)\times U(2)))$ to
the direc sum factors $su(m)\oplus su(2)$, $su(m)$ and $su(2)$ in $\mathfrak{g}$ respectively.

Using the method for
Case 12 or for Case 9 with $m>4$, $W$ defines a nonzero CK-vector field on the Riemannian normal homogeneous space $(SU(m)\times SU(2))/H'$ with $\mathfrak{h'}=\mathrm{Lie}(H')=\pi(\mathfrak{h})$. Direct
calculation shows that the dimension of
$\pi_1(\mathfrak{h}')=\pi(\mathfrak{h})$ ($\pi_2(\mathfrak{h}')=\pi(\mathfrak{h})$ respectively) is strictly between 0 and $\dim SU(m)$ ($\dim SU(2)$ respectively). By Proposition \ref{normal-homogeneity-CK-vector-field-semi-simple-case},
$W$ defines a nonzero CK-vector field for the Riemannian
normal homogeneous metric on either $SU(m)/H_1$ with
$\mathrm{Lie}(H_1)=\pi_1(\mathfrak{h}')$ or
$SU(2)/H_2$ with
$\mathrm{Lie}(H_2)=\pi_2(\mathfrak{h}')$.
But they are not listed in Theorem \ref{normal-homogeneity-CK-vector-field-simple-case}.

Using Theorem \ref{main-thm-1} and
Summarizing all the above cases,
we have proved the following theorem.

\begin{theorem}\label{main-thm-2}
Any homogeneous hypersurface $M$ of a unit sphere
$S^n(1)=(S^n,h)$ is isoparametric for the Randers sphere
$(S^n,F)$ of constant flag curvature, with the navigation datum $(h,W)$ in which $W$ is taken from the open $||\cdot||_h$-unit ball
in the Lie algebra of $G=I_o(S^n,M,h)$.

Furthermore, when $(S^n,F)$ is non-Riemannian, $M$ is $K$-homogeneous with $K=I_o(S^n,M,F)$ iff one of following cases happens:
\begin{description}
\item{\rm (1)} $G=SO(n)$ with an even number $n$, and $W$ is $O(n+1)$-conjugate to
    $\mathrm{diag}(0,\lambda J_{n})$ with $0<\lambda<1$.
\item{\rm (2)}
$G=SO(p)\times SO(n+1-p)$ when $p(n+1-p)$ is an even positive number, and
$W\in so(p)\oplus so(n+1-p)$ is $O(n+1)$-conjugate to
$\mathrm{diag}(\lambda_1 J_{2n_1},\lambda_2 J_{2n_2})$ with
$0<\lambda_1\leq\lambda_2<1$ or
$\mathrm{diag}(0_{n_0},\lambda J_{2n_1})$ with $0<\lambda<1$.
\item{\rm (3)}
$G$ has a one-dimensional center, i.e.
\begin{eqnarray*}
& &U(5) \mbox{ with }n=19,\\
& &SO(m)\times SO(2)\mbox{ with }m\geq 3
\mbox{ and }n=2m-1,\\ & &S(U(m)\times U(2))\mbox{ with }m\geq 2\mbox{ and }n=4m-1,\\
& &(Spin(10)\times SO(2))/\mathbb{Z}_4\mbox{ when }n=31.
\end{eqnarray*}
and $W$ is chosen from $\mathfrak{c}(\mathfrak{g})$ which must be of the form $\lambda J_{n+1}\in so(n+1)$ with $0<\lambda<1$ up to $O(n+1)$-conjugation.
\end{description}
\end{theorem}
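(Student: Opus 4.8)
The first assertion I would read off from Theorem \ref{main-thm-1}. If $W$ lies in the open $||\cdot||_h$-unit ball of $\mathfrak{g}=\mathrm{Lie}(G)$, then $W$ is a Killing vector field of $h$ with $|W(x)|_h<1$ at every point, so by Theorem \ref{classification-of-constant-curvature-Randers-space} the navigation datum $(h,W)$ produces a Randers sphere $(S^n,F)$ of constant flag curvature. Since $G=I_o(S^n,M,h)$ preserves $M$, there is a $G$-invariant isoparametric function $f$ on $S^n(1)$ having $M$ as a regular level set (the $G$-action being of cohomogeneity one), and because $\exp(tW)\in G$ the flow generated by $W$ leaves $f$ invariant. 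Theorem \ref{main-thm-1} then gives that $f$, and hence $M$, is isoparametric for $(S^n,F)$.

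For the second assertion the plan is to reduce $K$-homogeneity to a Lie-theoretic condition and then run through the fourteen entries of the classification table. By Lemma \ref{homogeneous-lemma}, if $M$ is $K$-homogeneous then $W\in\mathfrak{k}=\mathrm{Lie}(K)$, where $K$ is the connected centralizer of $W$ in $G$; writing $M=G/H$, this is exactly the splitting (\ref{0001}), $\mathfrak{g}=\mathfrak{h}+\mathfrak{k}$, which geometrically says that $W|_M$ is a nonzero Killing vector field of constant length (a CK-vector field) on $G/H$ with respect to every $G$-homogeneous metric, and the same then holds on each level set $f^{-1}(t)=G/H_t$ of the associated isoparametric function, in particular on the two focal submanifolds with their $G$-normal homogeneous metrics. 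The task thus becomes: for which entries of the table can $W$ restrict to such a CK-vector field?

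The ``if'' direction is then short. When $W$ lies in the center $\mathfrak{c}(\mathfrak{g})$, $G$ centralizes $W$, so $G\subseteq I_o(S^n,F)$ and $K=G$ acts transitively on $M$, which settles Case (3); in Cases (1) and (2) the displayed $W$ are precisely those for which the navigation leaves $K=I_o(S^n,M,F)$ transitive on $M$. For the ``only if'' direction I would go down the list: Case 1 is excluded because the focal submanifolds are points and Case 13 because $\dim G=\dim M$; in Cases 3--7 and 14 the group $G$ is simple while the focal submanifolds, carrying their normal homogeneous rather than induced metrics, are not among $S^{2n-1}$, $S^7$ and $SU(2n)/Sp(n)$, contradicting Theorem \ref{normal-homogeneity-CK-vector-field-simple-case}; Case 11 is eliminated by splitting a focal submanifold via Proposition \ref{normal-homogeneity-CK-vector-field-semi-simple-case} and then again invoking Theorem \ref{normal-homogeneity-CK-vector-field-simple-case}; and in the remaining Cases 8, 9, 10, 12, all of which carry a one-dimensional center, a non-central $W$ would, after projection to the simple ideals and an application of Proposition \ref{normal-homogeneity-CK-vector-field-semi-simple-case}, yield a CK-vector field on a normal homogeneous space off that short list, while for the low-rank exceptions ($m=3,4$ in Case 9 and $U(5)$ in Case 8) one instead checks directly that $\dim K<\dim M$.

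The step I expect to be the main obstacle is exactly this last family of $g=4$ cases --- above all Case 10 with $G=S(U(m)\times U(2))$, together with the low-rank exceptions in Cases 8 and 9. There one must realize a focal submanifold explicitly as a quotient $G/H$, compute the projections $\pi_i(\mathfrak{h})$ onto the simple ideals $\mathfrak{g}_i$ and verify that each has dimension strictly between $0$ and $\dim\mathfrak{g}_i$ so that Proposition \ref{normal-homogeneity-CK-vector-field-semi-simple-case} is applicable, and then rule out each resulting factor against Theorem \ref{normal-homogeneity-CK-vector-field-simple-case}. Behind the whole argument, the classification of CK-vector fields on normal homogeneous spaces --- Theorem \ref{normal-homogeneity-CK-vector-field-simple-case} and Proposition \ref{normal-homogeneity-CK-vector-field-semi-simple-case}, imported from \cite{XW2016} and \cite{WPX2017} --- is the deep input that makes the case analysis possible.
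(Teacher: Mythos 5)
Your first paragraph and the treatment of the hard table entries (Cases 3--7, 14, 11, and the one-dimensional-center cases 8, 9, 10, 12 via Theorem \ref{normal-homogeneity-CK-vector-field-simple-case} and Proposition \ref{normal-homogeneity-CK-vector-field-semi-simple-case}, with dimension counts for $U(5)$ and $SO(3),SO(4)\times SO(2)$) follow the paper. But there is a genuine gap in how you set up the case analysis: you pass directly from $W\in\mathfrak{k}$ and $\mathfrak{g}=\mathfrak{h}+\mathfrak{k}$ to ``$W$ restricts to a nonzero CK-vector field on every level set, in particular on the focal submanifolds,'' and then run all fourteen cases through the CK machinery. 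The paper instead first puts $W$ into the normal form $\mathrm{diag}(0_{n_0},\lambda_1 J_{2n_1},\ldots,\lambda_k J_{2n_k})$ and splits into three branches: (i) $n_0>0,k=2$ or $k>2$, where $I_o(S^n,F)$ acts with cohomogeneity $>1$ and $K$-homogeneity is impossible; (ii) $n_0=0,k=2$ or $n_0\geq 1,k=1$, where $K=I_o(S^n,F)$ is of unitary type acting with cohomogeneity one, so $M$ must be one of its orbits, which is exactly how conclusions (1) and (2) arise, with the explicit matrices $\mathrm{diag}(0,\lambda J_n)$, $\mathrm{diag}(0_{n_0},\lambda J_{2n_1})$, $\mathrm{diag}(\lambda_1 J_{2n_1},\lambda_2 J_{2n_2})$ and with the ``if'' direction for these cases; (iii) $n_0=0,k=1$, i.e.\ $W$ of constant length on $S^n$, and only in this branch is the fourteen-case focal-manifold analysis performed.

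Without this reduction your argument is not merely incomplete but contradicts the statement you are proving: you exclude table Case 1 ($G=SO(n)$) outright ``because the focal submanifolds are points,'' yet conclusion (1) asserts that geodesic hyperspheres with $W$ conjugate to $\mathrm{diag}(0,\lambda J_n)$ are $K$-homogeneous (there $K=U(n/2)$ acts with cohomogeneity one and $W$ vanishes at the two poles, which are the focal points). The underlying problem is that $K$-homogeneity of $M$ gives $W$ constant length along each level set, but it does not make that restriction nonzero on the focal submanifolds; the contradictions you draw from Theorem \ref{normal-homogeneity-CK-vector-field-simple-case} need a nonzero CK-vector field, and this nonvanishing is precisely what the assumption $n_0=0,k=1$ supplies in the paper. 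For the same reason your Case 2 discussion only covers the $\lambda_1=\lambda_2$ (constant-length) situation, and your ``if'' direction for (1) and (2) (``the displayed $W$ are precisely those for which navigation leaves $K$ transitive'') is an assertion of the conclusion rather than a proof; the paper obtains sufficiency there from the cohomogeneity-one orbit structure of $I_o(S^n,F)$. So the missing idea is the normal-form/cohomogeneity trichotomy for $W$; with it inserted before your fourteen-case run-through, the rest of your plan matches the paper.
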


By Lemma \ref{homogeneous-lemma}, Theorem \ref{main-thm-2} and the argument in the proof of Theorem \ref{main-thm-1}, we get the following immediate corollary.
\begin{corollary}\label{corollary-1}
If $M$ is a homogeneous hypersurface of a
non-Riemannian Randers sphere of constant flag curvature,
then it must have one, two or four distinct principal curvatures.
\end{corollary}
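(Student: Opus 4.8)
The plan is to read the conclusion directly off the structural results already established. First I would invoke Lemma \ref{homogeneous-lemma}: if $M$ is $K$-homogeneous for the non-Riemannian Randers sphere $(S^n,F)$ of constant flag curvature, with $K=I_o(S^n,M,F)$, then $M$ is tangent to the Killing vector field $W$ in the navigation datum $(h,W)$, equivalently it is preserved by the flow of $W$. By Theorem \ref{main-thm-1} this forces $M$ to be an isoparametric hypersurface of the unit sphere $S^n(1)=(S^n,h)$. Since $K\subseteq G:=I_o(S^n,M,h)$ and $K$ already acts transitively on $M$, so does $G$; hence $M$ is a homogeneous isoparametric hypersurface of $S^n(1)$, its number of distinct principal curvatures being one of the values $g\in\{1,2,3,4,6\}$, and its group $G$ occurring in the fourteen-row classification table above.

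Next I would feed this into Theorem \ref{main-thm-2}, which records exactly which rows of that table admit a $K$-homogeneous structure for a non-Riemannian $F$. Going through the three possibilities listed there: case (1) is $G=SO(n)$ (row 1, $g=1$); case (2) is $G=SO(p)\times SO(n+1-p)$ (row 2, $g=2$); and case (3) is one of $U(5)$, $SO(m)\times SO(2)$ with $m\geq 3$, $S(U(m)\times U(2))$ with $m\geq 2$, or $(Spin(10)\times SO(2))/\mathbb{Z}_4$, which are rows 8, 9, 10 and 12 respectively, each with $g=4$. In particular every row with $g=3$ (rows 3--6) and every row with $g=6$ (rows 13, 14) is excluded, so $g\in\{1,2,4\}$. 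This is exactly the assertion of Corollary \ref{corollary-1}, provided one knows that the number of distinct principal curvatures of $M$ inside the Finsler space $(S^n,F)$ is this same integer $g$.

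The only place where genuine work is hidden is that last identification, and it is already supplied by the computation inside the proof of Theorem \ref{main-thm-1}. A priori the principal curvatures of $M$ as a hypersurface of $(S^n,F)$ --- computed with respect to the localization metric $g^F_{\mathbf{n}_1}$ along the $F$-unit normal $\mathbf{n}_1=\mathbf{n}+W$, and likewise with $g^F_{\mathbf{n}_2}$ along $\mathbf{n}_2=-\mathbf{n}+W$ --- need not coincide with the Riemannian principal curvatures of $M$ with respect to $h$. But in the adapted coordinates of that proof, $g^F_{\mathbf{n}_1}$ and $g^F_{\mathbf{n}_2}$ have the same local expression as $h$ after the substitution $x^i\mapsto\tilde{x}^i$, so the shape operators of the level sets of the associated isoparametric function with respect to $h$, $g^F_{\mathbf{n}_1}$ and $g^F_{\mathbf{n}_2}$ all share the same eigenvalues, hence the same number of distinct ones. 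Thus $g$ is a genuine invariant of $M$, independent of which of these three Riemannian metrics one uses, and the count $1$, $2$ or $4$ is unambiguous. I would present the steps in precisely this order: homogeneity gives tangency to $W$ (Lemma \ref{homogeneous-lemma}); tangency gives Riemannian isoparametric and homogeneous (Theorem \ref{main-thm-1}); the table together with Theorem \ref{main-thm-2} forces $g\in\{1,2,4\}$; and the coordinate computation of Theorem \ref{main-thm-1} shows $g$ is metric-independent, so the statement is genuinely about $M$. No single step is a serious obstacle --- the corollary is essentially a repackaging of the earlier theorems --- but the metric-independence of the principal-curvature count is the subtlety that should be spelled out.
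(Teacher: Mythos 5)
Your proposal is correct and follows essentially the same route as the paper, which states the corollary as an immediate consequence of Lemma \ref{homogeneous-lemma}, Theorem \ref{main-thm-2} and the coordinate argument in the proof of Theorem \ref{main-thm-1} (with the same remark that the principal curvatures are taken with respect to the localization metric $g^F_{\nabla f}$). Your explicit check that this metric has the same local expression as $h$, so that the count $g\in\{1,2,4\}$ is unambiguous, is exactly the point the paper leaves implicit.
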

Here the principal curvature is the one with respect to
the localization metric $g^F_{\nabla f}$ in some neighborhood of $M$.

\section{Isoparametric hypersurface of OT-FKM type}
According to the recent progress on the classification theory for isoparametric hypersurfaces in unit spheres \cite{Ch2016} \cite{CCJ} \cite{Mi2013},
all non-homogeneous isoparametric functions (and hypersurfaces) for the unit sphere $S^n(1)=(S^n,h)$ belongs to the OT-FKM type, which can be constructed as following.

Let $\{P_0,\ldots,P_m\}$ with $m\geq 1$ be a {\it symmetric Clifford system} on $\mathbb{R}^{2l}$ (with the standard Euclidean inner product $\langle\cdot,\cdot\rangle$ and norm $|x|=\langle x,x\rangle^{1/2}$), i.e. $P_i$'s are real symmetric $2l\times 2l$-matrices
satisfying $P_iP_j+P_jP_i=2\delta_{ij}I_{2l}$ for all $i$ and $j$. Then for $x\in\mathbb{R}^{2l}$ with $|x|=1$,
$$f(x)=|x|^4-2\sum_{i=0}^m\langle P_i x,x\rangle^2$$
defines an isoparametric function on the unit sphere $S^{2l-1}(1)$, when both $m_1=m$ and $m_2=l-m-1$ are positive. We call this $f$ (or its regular level sets) an isoparametric function
(or isoparametric hypersurfaces, respectively) of the {\it OT-FKM type}. An isoparametric hypersurface of the OT-FKM type has four distinct principal curvature, with multiplicities $m_1$ and $m_2$ given above.

An isoparametric function $f$ of the OT-FKM type is determined
by the {\it Clifford sphere} $\Sigma=\Sigma(P_0,\ldots,P_m)$ defined as
$$\Sigma(P_0,\ldots,P_m)=\{P=\sum a_iP_i\mbox{ with each }a_i\in\mathbb{R}\mbox{ and }\sum a_i^2=1\}$$
rather than the particular choices for $P_i$'s.
Notice that
each $P\in\Sigma(P_0,\ldots,P_m)$ satisfy $P^2=I_{2l}$ and its eigenspaces $E_\pm(P)$ for $\pm 1$ have the same dimension $l$.

Conversely, if we assume $m=m_1\leq m_2=l-m-1$, by Theorem 4.6 in \cite{FKM1981},
the sphere $\Sigma$ can also be determined by $f$, which was
argued as following.
The corresponding focal manifold $M_-$ for the minimal value $-1$ of $f$
is an $l-1$-dimensional sphere bundle over $\Sigma$. The fiber
over each $P\in\Sigma$ is the unit sphere in $E_+(P)$. For each
$y\in M_-$, there exists a unique $P\in\Sigma$ satisfying $Py=y$. By Theorem 4.6 in \cite{FKM1981}, when $m_1\leq m_2$,
$$E_+(P)=\mathbb{R}y\oplus\mathrm{span}\{\bigcup_{N\in\perp_y M_-\backslash 0 }S_N\},$$
where $\perp_y M_-$ is the orthogonal complement of $T_yM_-$
in $T_yS^{2l-1}(1)$, and $S_N$ is the shape operator of $M_-$
at $y$, with respect to the normal vector $N$. Obviously, $E_+(P)$ and then $P$ are totally determined. So $\Sigma$ is determined by all points of $M_-$ and the
geometry of $M_-$ in $S^{2l-1}(1)$.

For each isoparametric function $f$ of the OT-FKM type, defined
by the Clifford sphere $\Sigma$. There
are many Killing vector field of the unit sphere $S^{2l-1}(1)=(S^{2l-1},h)$ which generate flows preserving $f$. They can be
used in the navigation datum $(h,W)$ for Theorem \ref{main-thm-1}. To determine all the choices for $W$, we only need
to determine the Lie algebra they span, in which the set of all possible $W$'s is the open unit $||\cdot||_h$-disk.

Firstly, the connected isometry group $SO(m+1)$ of $\Sigma$ can be lifted to a subgroup of $G$. Let $P$ and $Q$ be an orthogonal pair in $\Sigma$, i.e. $P$ and $Q$ in $\Sigma$ satisfy
$\langle P,Q\rangle=\frac{1}{2l}\mathrm{Tr}PQ=0$, or equivalently
$PQ+QP=0$. Their product $PQ$ is a skew symmetric matrix, which defined
a Killing vector field $W$ of $S^{2l-1}(1)$. The conjugations  $\mathrm{Ad}(e^{tX})$ preserve $\Sigma$, rotating the plane
spanned by $P$ and $Q$ and fixing their common orthogonal complement. So the flow generated by $W$ preserves $f$. All such $W$ spanned a Lie algebra $\mathfrak{g}'=so(m+1)$ in $\mathfrak{g}$ with the Spin action on $\mathbb{R}^{2l}$.

Secondly, there is a subalgebra $\mathfrak{c}(\Sigma)$ of $\mathfrak{g}$, spanned by all $X$'s in $so(2l)$
commuting with each $P\in\Sigma$. This subalgebra can be determined
case by case as following.

When $m$ is not a multiple of 4, up to equivalence, the
symmetric Clifford system $\{P_0,\ldots,P_m\}$ on $\mathbb{R}^{2l}$, which is a real representation for the symmetric Clifford algebra $\mathrm{Cl}^{m+1}$, can be decomposed as $k$ copies of
the unique irreducible one $\{P'_0,\ldots,P'_m\}$ on $\mathbb{R}^{2\delta_m}$, with $P_i=P'_i\otimes I_k$
for each $i$.
Denote $\mathbf{A}$ the subalgebra generated by the Clifford
system $\{P'_0,\ldots,P'_m\}$ in the matrix algebra over $\mathbb{R}^{2\delta_m}$. It is a simple algebra of the form
$\mathbb{R}(\cdot)$, $\mathbb{C}(\cdot)$ or $\mathbb{H}(\cdot)$, i.e. the matrix algebras with real, complex, quaternic coefficients.
 A matrix $X\in so(2l)$ belongs to $\mathfrak{c}(\Sigma)$ iff it commutes with all the matrices in $\mathbf{A}\otimes I_k$. We
 can use Schur's Lemma to discuss each of the following cases.

{\bf Case 1.} When $m=8q+r$ with $r=1$ or $7$, $\mathbf{A}=\mathbb{R}(2\delta_m)$ with $\delta_m=2^{4q}$ when $r=1$ and $\delta_m=2^{4q+3}$ when $r=7$.
In this case, $X\in \mathfrak{c}(\Sigma)$ iff it can be presented as $X=I_{2\delta_m}\otimes X'$ where $X'$ is skew symmetric.
So $\mathfrak{c}(\Sigma)$ is isomorphic to $so(k)$.

{\bf Case 2.} When $m=8q+r$ with $r=2$ or $6$,
$\mathbf{A}=\mathbb{C}(\delta_m)\subset\mathbb{R}(2\delta_m)$ with
$\delta_m=2^{4q+1}$ when $r=2$ and
$\delta_m=2^{4q+3}$ when $r=6$. In this case,
$X\in\mathfrak{c}(\Sigma)$ iff it can be presented as
$X=I_{\delta_m}\otimes X'$ where $X'\in \mathbb{C}(k)\subset\mathbb{R}(2k)$ is real skew symmetric, i.e. $X'\in u(k)$. So  $\mathfrak{c}(\Sigma)$ is isomorphic
to $u(k)$.

{\bf Case 3.} When $m=8q+r$ with $r=3$ or $5$,
$\mathbf{A}=\mathbb{H}(\delta_m/2)\subset\mathbb{R}(2\delta_m)$ with $\delta_m=2^{4q+2}$ when $r=3$
and $\delta_m=2^{4q+3}$ when $r=5$. In this case,
$X\in\mathfrak{c}(\Sigma)$ iff it can be presented as  $X=I_{\delta_m/2}\otimes X'$ where $X'\in\mathbb{H}(k)\subset\mathbb{R}(4k)$ is real skew symmetric, i.e. $X'\in sp(k)$. So $\mathfrak{c}(\Sigma)$ is
isomorphic to $sp(k)$.

When $m$ is a multiple of 4, there exist exactly two distinct irreducible Clifford system
$\{P'_0,\ldots,P'_{m}\}$. They are on
real vector spaces of the same dimension $2\delta_m$, which will be denoted as $\mathbf{V}_1$ and $\mathbf{V}_2$ respectively.
Denote $\mathbf{A}_i$, $i\in\{1,2\}$, the subalgebra generated by
 each irreducible Clifford system $\{P'_0,\ldots,P'_m\}$ in the matrix algebra over $\mathbf{V}_i$.

The Clifford system $\{P_0,\ldots,P_m\}$ can
be regarded as the sum of $k_1$ copies of the irreducible one
on $\mathbf{V}_1$ and $k_2$ copies of that on $\mathbf{V}_2$, where $k_1+k_2=k$ and $l=k\delta_m$. Up to congruence, we may
assume $k_1\geq k_2$.

{\bf Case 4.} When $m=8q+r$ with $r=4$, for each $\mathbf{V}_i$, $\mathbf{A}_i=\mathbb{H}(\delta_m/2)$ with $\delta_m=2^{4q+2}$. In this case, $X\in\mathfrak{c}(\Sigma)$ iff $X$ can be presented
as $X=I_{\delta_m/2}\otimes(X_1\oplus X_2)$, where for each $i$,
$X_i\in\mathbb{H}(k_i)\subset\mathbb{R}(4k_i)$ is real skew symmetric, i.e. $X_i\in sp(k_i)$. So
$\mathfrak{c}(\Sigma)$ is isomorphic to $sp(k_1)\oplus sp(k_2)$.

{\bf Case 5.} When $m=8q$, for each $\mathbf{V}_i$, $\mathbf{A}_i=\mathbb{R}(2\delta_m)$ with $\delta_m=2^{4q-1}$.
In this case, $X\in\mathfrak{c}(\Sigma)$ iff $X$ can be
presented as $X=I_{2\delta_m}\otimes (X_1\oplus X_2)$ where
$X_i\in so(k_i)$ for each $i$. So $\mathfrak{c}(\Sigma)$
is isomorphic to $so(k_1)\oplus so(k_2)$.

Finally, we determine the isomorphic type of $\mathfrak{g}=\mathrm{Lie}(I_o(S^{2l-1},M,h))$
when $m=m_1\leq m_2=l-m-1$. We have observed that
any isometry $\varphi$ in $I_o(S^{2l-1}(1),M,h)$ corresponds
to an orthogonal matrix $T\in SO(2l)$, such that
the conjugation $\mathrm{Ad}(T)$ preserves the Clifford sphere $\Sigma=\Sigma(P_0,P_1,\cdots,P_m)$.
With the metric defined by $\langle P,Q\rangle=\frac{1}{2l}\mathrm{Tr}PQ$, $\Sigma$ is identified with a unit sphere, and
$\mathrm{Ad}(T)$ defines an isometry $\tilde{\varphi}$ on $\Sigma$.
The group morphism from $\varphi$ to $\tilde{\varphi}$ is surjective, and $\varphi$ belong to the kernel iff the corresponding $T$ commutes with each $P\in\Sigma$. At the
Lie algebra level, we have the exact sequence
$$0\rightarrow\mathfrak{c}(\Sigma)
\rightarrow\mathfrak{g}\rightarrow so(m+1)\rightarrow 0,$$
and the subalgebra $\mathfrak{g}'\subset\mathfrak{g}$ (isomorphic to $so(m+1)$) defined above is a section for this exact sequence. So $\mathfrak{g}$ has the same isomorphic type as $so(m+1)\oplus\mathfrak{c}(\Sigma)$.

To summarize, we have the following theorem.
\begin{theorem} \label{main-thm-3}
Let $M$ be an isoparametric hypersurface in the unit sphere $S^{2l-1}(1)=(S^{2l-1},h)$ of the OT-FKM type, defined
by the Clifford system $\{P_0,\ldots,P_m\}$, and satisfy $m=m_1\leq m_2=l-m-1=k\delta_m-m-1$. Then $M$ is isoparametric for the Randers sphere $(S^{2l-1},F)$ when the Killing
vector field $W$ in the navigation datum $(W,h)$ is chosen from
the open $||\cdot||_h$-unit ball in
Lie algebra of $I_o(S^{2l-1},M,h)$ which is isomorphic to one of the following:
\begin{description}
\item{\rm (1)} $so(m+1)\oplus so(k)$ when $m\equiv 1$ or $7$ (mod 8),
\item{\rm (2)} $so(m+1)\oplus u(k)$ when $m\equiv 2$ or $6$ (mod 8),
\item{\rm (3)}
$so(m+1)\oplus sp(k)$ when $m\equiv 3$ or $5$ (mod 8),
\item{\rm (4)}
$so(m+1)\oplus sp(k_1)\oplus sp(k_2)$ with $k_1+k_2=k$ when $m\equiv 4$ (mod 8),
\item{\rm (5)}
$so(m+1)\oplus so(k_1)\oplus so(k_2)$ with $k_1+k_2=k$, when
$m\equiv 0$ (mod 8).
\end{description}
\end{theorem}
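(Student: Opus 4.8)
The plan is to deduce Theorem~\ref{main-thm-3} from Theorem~\ref{main-thm-1} together with an explicit determination of the Lie algebra $\mathfrak{g}=\mathrm{Lie}(I_o(S^{2l-1},M,h))$ of Killing fields on $S^{2l-1}(1)$ whose flows preserve the OT-FKM function $f$. First I would invoke Theorem~\ref{main-thm-1}: since $f$ is isoparametric for the round metric $h$, any Killing field $W$ with $\|W\|_h<1$ whose flow preserves $f$ (equivalently, $W$ tangent to $M$ and all other level sets) produces, via the navigation datum $(h,W)$, a Randers metric $F$ of constant flag curvature for which $M$ is still isoparametric. So it remains to show that the admissible $W$'s form precisely the open $\|\cdot\|_h$-unit ball in $\mathfrak{g}$, and to identify $\mathfrak{g}$ up to isomorphism in each residue class of $m$ modulo $8$. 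The condition $F(W(x))<1$ for all $x$ needed by the navigation process is, since $\|\cdot\|_h$ is $G$-invariant, exactly $\|W\|_h<1$; and the flow-preserving-$f$ condition is exactly $W\in\mathfrak{g}$.

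For the identification I would argue in two stages. Stage one sets up the exact sequence $0\rightarrow\mathfrak{c}(\Sigma)\rightarrow\mathfrak{g}\rightarrow so(m+1)\rightarrow 0$. The map to $so(m+1)$ comes from the action of an isometry $\varphi\in I_o(S^{2l-1},M,h)$, represented by $T\in SO(2l)$, on the Clifford sphere $\Sigma$: here surjectivity uses the hypothesis $m=m_1\leq m_2$, because by Theorem~4.6 of \cite{FKM1981} the sphere $\Sigma$ is reconstructed intrinsically from the focal manifold $M_-$ and its shape operators in $S^{2l-1}(1)$, so every $T$ preserving $M$ (hence $M_-$) normalizes $\Sigma$, and conversely the lift $\mathfrak{g}'\cong so(m+1)$ built from the products $PQ$ of orthogonal pairs in $\Sigma$ consists of such Killing fields and furnishes a section. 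The kernel consists of the $T$ commuting with every $P\in\Sigma$, whose Lie algebra is $\mathfrak{c}(\Sigma)$. Hence $\mathfrak{g}\cong so(m+1)\oplus\mathfrak{c}(\Sigma)$ as Lie algebras.

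Stage two computes $\mathfrak{c}(\Sigma)$ by Clifford-module theory. Writing the symmetric Clifford system $\{P_0,\ldots,P_m\}$ on $\mathbb{R}^{2l}$ as $k$ copies of an irreducible one on $\mathbb{R}^{2\delta_m}$ (or, when $4\mid m$, as $k_1$ copies of one irreducible type on $\mathbf{V}_1$ and $k_2$ copies of the other on $\mathbf{V}_2$, with $k_1+k_2=k$), the associated matrix algebra $\mathbf{A}$ generated by an irreducible system is $\mathbb{R}(\cdot)$, $\mathbb{C}(\cdot)$ or $\mathbb{H}(\cdot)$ according to the Bott-periodic pattern of $m\bmod 8$; Schur's lemma then identifies the skew-symmetric matrices in $so(2l)$ commuting with $\mathbf{A}\otimes I_k$ with $so(k)$, $u(k)$ or $sp(k)$ respectively (and with $so(k_1)\oplus so(k_2)$ or $sp(k_1)\oplus sp(k_2)$ in the split case). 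Combining with Stage one gives the five cases (1)--(5), and $W$ ranges over the open $\|\cdot\|_h$-ball in this algebra.

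The main obstacle I anticipate is the case-by-case bookkeeping in Stage two: matching $\delta_m$, the coefficient field of $\mathbf{A}$ (or $\mathbf{A}_1,\mathbf{A}_2$), and the multiplicity pattern (single $k$ versus a pair $(k_1,k_2)$ for $m\equiv 0,4\pmod 8$) to the real Bott periodicity, and checking carefully via Schur's lemma that the commutant \emph{inside} $so(2l)$ is exactly the compact real form claimed, rather than its complexification or the full matrix commutant. The conceptual ingredients --- Theorem~\ref{main-thm-1}, the exact sequence, and the explicit section $\mathfrak{g}'$ --- are, by contrast, comparatively routine once assembled.
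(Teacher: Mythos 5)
Your proposal follows essentially the same route as the paper: reduce to Theorem \ref{main-thm-1}, use the FKM Theorem 4.6 reconstruction of $\Sigma$ from $M_-$ (where $m_1\leq m_2$ enters) to get the exact sequence $0\to\mathfrak{c}(\Sigma)\to\mathfrak{g}\to so(m+1)\to 0$ split by the $PQ$-generated copy of $so(m+1)$, and compute $\mathfrak{c}(\Sigma)$ by Schur's lemma and Bott periodicity case by case in $m\bmod 8$, including the split cases $k=k_1+k_2$ for $m\equiv 0,4\pmod 8$. This matches the paper's argument in both structure and detail, so no further comparison is needed.
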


Almost all isoparametric hypersurfaces of the OT-FKM type
satisfy the condition $m_1=m\leq l-m-1=m_2$. Even when $m_1>m_2$, we can still find a subalgebra of $\mathfrak{g}$
with the same isomorphism type as listed in Theorem \ref{main-thm-3}, from which we can choose $W$ for the navigation and get a special isoparametric hypersurface of $(S^n,F)$.

If an isoparametric hypersurface $M\subset S^n(1)$ of the OT-FKM type satisfying $m_1>m_2$ and $(m_1,m_2)\neq (8,7)$,
either it is homogeneous, or
it is congruent to another isoparametric hypersurface of the OT-FKM
type with $m_1$ an $m_2$ switched. In either case, we can determine all the possible choices of $W$, i.e. classify the Randers metrics $F$ for the special isoparametric hypersurfaces
in Theorem \ref{main-thm-1}.

\end{document}